\def\c{\gamma}
\def\t{\tau}
\newcommand{\gt}[1]{\mathfrak{#1}}
\newcommand{\jmod}[1]{\hspace{-1.5ex}\pmod{#1}}
\newcommand{\RR}{{\mathbb R}}
\newcommand{\CC}{{\mathbb C}}
\newcommand{\ZZ}{{\mathbb Z}}
\newcommand{\QQ}{{\mathbb Q}}
\newcommand{\HH}{{\mathbb H}}
\newcommand{\ii}{{\bf i}}
\newcommand{\lab}{{\langle}}    
\newcommand{\rab}{{\rangle}}    
\newcommand{\End}{\operatorname{End}}
\newcommand{\Aut}{\operatorname{Aut}}
\newcommand{\Id}{\operatorname{Id}}
\newcommand{\tr}{\operatorname{{tr}}}
\newcommand{\sgn}{\operatorname{sgn}}
\newcommand{\sdim}{\operatorname{sdim}}
\newcommand{\Cliff}{\operatorname{Cliff}}
\newcommand{\SL}{\operatorname{\textsl{SL}}}      
\newcommand{\GL}{{\textsl{GL}}}      
\newcommand{\vn}{V^{\natural}} 
\newcommand{\G}{\Gamma}	
\newcommand{\vac}{{\bf v}}
\newcommand{\tw}{{\rm tw}}
\renewcommand{\le}{\varepsilon}
\newtheorem{thm}{Theorem}[section]
\newtheorem{cor}[thm]{Corollary}
\newtheorem{prop}[thm]{Proposition}
\theoremstyle{definition}
\theoremstyle{remark}
\newtheorem{rmk}[thm]{Remark}
\numberwithin{equation}{section}
\begin{document}

\setstretch{1.4}

\title{
    \textsc{\huge{
    {T}he {U}mbral {M}oonshine {M}odule\\ for the {U}nique {U}nimodular\\ {N}iemeier {R}oot {S}ystem
    }}
    }

\renewcommand{\thefootnote}{\fnsymbol{footnote}} 
\footnotetext{\emph{MSC2010:} 11F22, 11F37, 17B69, 20C35.}     
\renewcommand{\thefootnote}{\arabic{footnote}} 


\author{
	John F. R. Duncan\footnote{
         Department of Mathematics, Applied Mathematics and Statistics,
         Case Western Reserve University,
         Cleveland, OH 44106,
         U.S.A.
         \newline\indent\indent
         {\em E-mail:} {\tt john.duncan@case.edu}
         }\\
	Jeffrey A. Harvey\footnote{
	Enrico Fermi Institute and Department of Physics,
         University of Chicago,
         Chicago, IL 60637,
         U.S.A.
         \newline\indent\indent
         {\em E-mail:} {\tt j-harvey@uchicago.edu}
	}
}

\date{2014 December 28}

\maketitle

\abstract{
We use canonically-twisted modules for a certain super vertex operator algebra to construct the umbral moonshine module for the unique Niemeier lattice that coincides with its root sublattice. In particular, we give explicit expressions for the vector-valued mock modular forms attached to automorphisms of this lattice by umbral moonshine. We also characterize the vector-valued mock modular forms arising, in which four of Ramanujan's fifth order mock theta functions appear as components.
}

\clearpage

\section{Introduction}

In his 2002 Ph.D thesis \cite{Zwegers} Zwegers gave an intrinsic definition of mock theta functions and provided new insight into three
families of such 
functions, constructed 
\begin{enumerate}
\item 
in terms of Appell--Lerch sums, 
\item 
as the Fourier coefficients of meromorphic Jacobi forms,
and 
\item 
via theta functions attached to cones in lattices of indefinite signature. 
\end{enumerate}
The first two constructions have played a central role in 
recently observed moonshine connections between finite groups and mock theta functions. These started with the observation in \cite{Eguchi2010} that the
elliptic genus of a K3 surface has a decomposition into characters of the $N=4$ superconformal algebra with multiplicities that at low levels
are equal to the dimensions of irreducible representations of the Mathieu group $M_{24}$.  Appell--Lerch sums appear in this analysis in the
so called ``massless" characters. This Mathieu moonshine connection was conjectured in \cite{UM,MUM} to be part of a much more general phenomenon,
known as umbral moonshine, which attaches a vector-valued mock modular form $H^X$, a finite group $G^X$, and an infinite-dimensional graded $G^X$-module $K^X$ to the root systems of each of the $23$ Niemeier lattices. The analysis in \cite{MUM} relied heavily on the construction of
mock modular forms in terms of meromorphic Jacobi forms and built on the important work in \cite{Dabholkar:2012nd} extending the analysis of \cite{Zwegers}
and characterizing special Jacobi forms in terms of growth conditions.

Whilst the existence of the $G^X$-modules $K^X$ has now been proved \cite{Gannon:2012ck,umrec} for all Niemeier root systems $X$, no explicit construction of the modules $K^X$ is yet known. 

However, in this paper
we construct the $G^X$-module $K^X$ for the case that $X=E_8^3$. 
To do so
we employ the third characterization of mock theta functions in terms of indefinite theta functions. This 
enables
us to 
employ the formalism of vertex operator algebras \cite{Bor_PNAS,FLM} which has been so fruitfully employed (in \cite{FLM,borcherds_monstrous} to name just two) in the understanding of monstrous moonshine \cite{MR554399,Tho_FinGpsModFns,Tho_NmrlgyMonsEllModFn}. 

See \cite{mnstmlts} for a recent review of moonshine both monstrous and umbral, and many more references on these subjects.

To explain the methods of this paper in more detail, we first recall the {\em Pochammer symbol}
\begin{gather}\label{eqn:intro-poch}
(x;q)_n :=\prod_{k=0}^{n-1} (1-xq^k), 
\end{gather}
and the fifth order mock theta functions
\begin{gather}
\begin{split}
\chi_0(q)&:=\sum_{n\geq 0} \frac{q^{n} }{(q^{n+1};q)_n},\\
\chi_1(q)&:= \sum_{n\geq 0} \frac{q^{n} }{(q^{n+1};q)_{n+1}},
\end{split}
\end{gather}
from Ramanujan's last letter to Hardy \cite{MR2280843,MR947735}. 
The conjectures of \cite{MUM} (see also \cite{mumcor}) imply 
the existence of a bi-graded super vector space $K^X=\bigoplus_r K^X_r=\bigoplus_{r,d}K^{X}_{r,d}$ that is a module for $G^X\simeq S_3$ and satisfies
\begin{gather}\label{eqn:intro:KXmocktheta}
\begin{split}
	\sdim_qK^X_1&=-2q^{-1/120}+\sum_{n>0}\dim K^X_{1,n-1/120}q^{n-1/120}=2q^{-1/120}(\chi_0(q)-2),\\
	\sdim_qK^X_7&=\sum_{n>0}\dim K^X_{7,n-49/120}q^{n-49/120}=2q^{71/120}\chi_1(q).
\end{split}
\end{gather}
Here $\sdim_q V:=\sum_{n}(\dim (V_{\bar 0})_n-\dim (V_{\bar 1})_n)q^n$ for $V$ a $\QQ$-graded super space with even part $V_{\bar 0}$ and odd part $V_{\bar 1}$.

According to work \cite{MR2558702} of Zwegers, we have identities
\begin{gather}\label{eqn:intro:zwegers}
\begin{split}
	2-\chi_0(q)&=\frac{1}{(q;q)_{\infty}^2}\left(\sum_{k,l,m\geq 0}+\sum_{k,l,m<0}\right)(-1)^{k+l+m}q^{(k^2+l^2+m^2)/2+2(kl+lm+mk)+(k+l+m)/2},\\
	\chi_1(q)&=\frac{1}{(q;q)_{\infty}^2}\left(\sum_{k,l,m\geq 0}+\sum_{k,l,m<0}\right)(-1)^{k+l+m}q^{(k^2+l^2+m^2)/2+2(kl+lm+mk)+3(k+l+m)/2},
\end{split}
\end{gather}
where $(x;q)_{\infty}:=\prod_{n\geq 0}(1-xq^n)$. In this article we use (\ref{eqn:intro:zwegers}) as a starting point for the construction of a super vertex operator algebra $V^X$ (cf. (\ref{eqn:va:cnstn-VX})). We show that canonically-twisted modules for $V^X$, constructed explicitly in \S\ref{sec:va:cnstn} (cf. (\ref{eqn:va:cnstn-Vtw})), furnish a bi-graded $G^X$-module for which the graded trace functions are exactly compatible with the predictions of \cite{MUM}. In other words, we construct the analogue of the moonshine module $\vn$, of Frenkel--Lepowsky--Meurman \cite{FLMBerk,FLMPNAS,FLM}, for the $X=E_8^3$ case of umbral moonshine. 

To prove that our construction is indeed the $X=E_8^3$ counterpart to $\vn$, we verify the $X=E_8^3$ analogue of the Conway--Norton moonshine conjecture, proven by Borcherds in \cite{borcherds_monstrous} in the case of the monster, which predicts that the trace functions arising are uniquely determined by their automorphy and their asymptotic behavior near cusps. Thus we verify the $X=E_8^3$ analogues of both of the two major conjectures of monstrous moonshine. 

To prepare for precise statements of results, recall that vector-valued functions $H^X_g(\tau)=(H^X_{g,r}(\tau))$ on the upper half plane $\HH$ are considered in \cite{MUM}, for $g\in G^X\simeq S_3$, where the components are indexed by $r\in \ZZ/60\ZZ$. 
Define $o(g)$ to be the order of an element $g\in G^X$. 
The $H^X_g$ are not uniquely determined in \cite{MUM}, except for the case that $g=e$ is the identity, $o(g)=1$. But it is predicted that $H^X_g$ is a mock modular form of weight $1/2$ for $\Gamma_0(o(g))$, with shadow given by a certain vector-valued unary theta function $S^X_g$ (cf. (\ref{eqn:mcktht-SXg})), and specified polar parts at the cusps of $\Gamma_0(o(g))$. In more detail, $H^X_g$ should have the same polar parts as $H^X:=H^X_e$ at the infinite cusp of $\Gamma_0(o(g))$, but should have vanishing polar parts at any non-infinite cusps. In practice, this amounts to the statement that we should have
\begin{gather}
	H^X_{g,r}(\tau)=\begin{cases} \mp 2q^{-1/120}+O(q^{119/120}),&\text{ if $r=\pm 1,\pm11,\pm19,\pm29\pmod{60}$,}\\
	O(1),&\text{ otherwise,}
	\end{cases}
\end{gather}
for $q=e^{2\pi \ii\tau}$, and all components of $H^X_g(\tau)$ should remain bounded as $\tau\to 0$, if $g\neq e$. (For if $g\neq e$ then $o(g)=2$ or $o(g)=3$, and then $\Gamma_0(o(g))$ has only one cusp other than the infinite one, and this is the cusp represented by $0$.)

Our main result is the following, where the functions $T^X_g$ are defined in \S\ref{sec:mcktht:um} (cf. (\ref{eqn:mcktht-TXg})) in terms of traces of operators on canonically-twisted modules for $V^X$. 
\begin{thm}\label{thm:intro-maintheorem}
Let $g\in G^X$. 
If $o(g)\neq 3$ then  $2T^X_{g}$ is the Fourier expansion of the unique vector-valued mock modular form of weight $1/2$ for $\Gamma_0(o(g))$ whose shadow is $S^X_g$, and whose polar parts coincide with those of $H^X_g$. If $o(g)=3$ then $2T^X_g$ is the Fourier expansion of the unique vector-valued modular form of weight $1/2$ for $\Gamma_0(3)$ which has the multiplier system $\rho_{3|3}\overline{\sigma^X}$, and polar parts coinciding with those of $H^X_g$. 
\end{thm}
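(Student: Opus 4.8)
The plan is to reduce the theorem to a finite computation, by combining a uniqueness statement for mock modular forms with the explicit modular and analytic properties of the $T^X_g$ established in earlier sections. First I would record the uniqueness principle: if a vector-valued mock modular form of weight $1/2$ for $\Gamma_0(N)$ has prescribed shadow $S^X_g$ and prescribed polar parts at all cusps, then it is unique, because the difference of any two such forms is a genuine holomorphic vector-valued modular form of weight $1/2$ that is bounded at every cusp, hence a cusp form; and the relevant space of weight-$1/2$ cusp forms for $\Gamma_0(o(g))$ with the pertinent multiplier system vanishes. (When $o(g)=3$ the shadow $S^X_g$ vanishes, so the object in question is already an honest modular form with multiplier $\rho_{3|3}\overline{\sigma^X}$, and the same cusp-form-vanishing argument applies.) This is a standard Serre--Stark / weight-$1/2$ vanishing input together with a short dimension count against the explicit multiplier systems appearing.

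Next I would invoke the construction from \S\ref{sec:va:cnstn} and \S\ref{sec:mcktht:um}: $V^X$ is built from the indefinite-theta presentations (\ref{eqn:intro:zwegers}), and the $T^X_g$ are traces of $g$ acting on canonically-twisted $V^X$-modules, graded by the appropriate $L(0)$-eigenvalue and fermion number. From the vertex-algebraic side, the graded-trace functions $T^X_g$ are mock modular of weight $1/2$ on $\Gamma_0(o(g))$, with a completion whose holomorphic anomaly is governed by the unary theta series $S^X_g$ — this is exactly the content of the indefinite-theta/Zwegers machinery, which I would cite in the form proved in the body of the paper. So it remains to check two things: (i) the shadow of $2T^X_g$ equals $S^X_g$ (equivalently, the multiplier system matches $\rho_{3|3}\overline{\sigma^X}$ when $o(g)=3$), and (ii) the polar parts of $2T^X_g$ match those of $H^X_g$, i.e. the $q^{-1/120}$ leading behavior in components $r\equiv\pm1,\pm11,\pm19,\pm29\pmod{60}$ dictated by (\ref{eqn:intro:KXmocktheta}), together with boundedness at the non-infinite cusp when $g\neq e$.

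For (i), the shadow computation is intrinsic to the construction: the non-holomorphic completion of a trace function over a canonically-twisted module of a lattice-type SVOA picks up an Eichler-integral term whose derivative is the theta series attached to the definite part of the relevant sublattice, and one matches this against the definition of $S^X_g$ in (\ref{eqn:mcktht-SXg}) case by case for the three conjugacy classes of $G^X\simeq S_3$. For (ii), the infinite-cusp polar part is read off from the $q$-expansion of the twisted-module character at the identity — this recovers the normalization $2q^{-1/120}(\chi_0(q)-2)$ and $2q^{71/120}\chi_1(q)$ of (\ref{eqn:intro:KXmocktheta}) — and for $g\neq e$ the key point is that the $g$-twisted sectors contributing to the would-be polar terms at the cusp $0$ are acted on by $g$ without fixed vectors in the relevant degrees, forcing those polar parts to vanish. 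The main obstacle, I expect, is part (ii) at the non-infinite cusp: one must control the $\tau\to 0$ behavior of $T^X_g$, which requires an explicit $S$-transformation of the indefinite theta functions / Appell--Lerch pieces and a careful bookkeeping of which twisted sectors and which $g$-eigenspaces survive — this is where the special feature of the $E_8^3$ root system (that it equals its own root lattice, so the construction is especially rigid) is doing real work, and where the bulk of the genuine computation lies.
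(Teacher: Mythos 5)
Your overall architecture (establish modularity, shadow and polar parts of $2T^X_g$, then prove uniqueness via a weight-$1/2$ classification) matches the paper's, but two of your key steps are not actually arguments. First, your mechanism for boundedness at the non-infinite cusp --- that ``the $g$-twisted sectors contributing to the would-be polar terms at the cusp $0$ are acted on by $g$ without fixed vectors'' --- has no theorem behind it in this setting. The functions $T^X_g$ are traces of $g$ (together with $g_{\rho/2}p(0)$) on a \emph{fixed} canonically-twisted module for the cone vertex algebra $V^X$; no Zhu/Dong--Li--Mason/Dong--Zhao type modularity result applies here (the trace functions are mock modular of weight $1/2$, not weight-$0$ characters), so there is no a priori link between the expansion at $0$ and fixed points in $g$-twisted sectors. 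The paper instead proves boundedness at $0$ by exhibiting $\eta(2\tau)\,2T^-_{\hat\tau,r}$ explicitly as an indefinite theta function $\vartheta^{c^{(1)},c^{(2)}}_{a,b}$ (with concrete $A,a,b,c^{(1)},c^{(2)}$) and applying Zwegers' transformation formula (his Corollary 2.9), and handles $o(g)=3$ by classical one-dimensional theta functions. Relatedly, your claim that the shadow computation is ``intrinsic to the construction'' of canonically-twisted modules of lattice-type SVOAs is not a citable fact; in the paper it is the content of Proposition \ref{prop:mcktht:um-TXg}, which for $g=e$ needs Watson's identities (\ref{eqn:mcktht:um-chiFphi}) relating $\chi_0,\chi_1$ to $F_0,F_1,\phi_0,\phi_1$ and Zwegers' Propositions 4.10 and 4.13, and for $o(g)=2$ needs Proposition \ref{prop:mcktht-Zwegersprop} to identify the Eichler-integral correction terms matching $S^X_g$ in (\ref{eqn:mcktht-SXg}). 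Deferring this to ``the machinery proved in the body of the paper'' is circular in a blind proof: it is the bulk of the proof.

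Second, your uniqueness step asserts that the difference of two candidates is ``a cusp form'' and that ``the relevant space of weight-$1/2$ cusp forms for $\Gamma_0(o(g))$ with the pertinent multiplier system vanishes.'' As stated this is both unjustified and, as a general principle, false: nonzero weight-$1/2$ cusp forms with multiplier exist (e.g.\ $\eta(24\tau)=\sum_n\chi_{12}(n)q^{n^2}$). The paper's argument is different and is what you would need: the difference $h=(h_r)$ is a holomorphic weight-$1/2$ form whose components are modular on $\Gamma(120)$ (on $\Gamma(360)$ when $o(g)=3$); Skoruppa's effective Serre--Stark theorem then expresses $h_r$ in terms of thetanullwerte $\theta^0_{n,r}$ with $n\mid 30$ (resp.\ $n\mid 90$), and one checks from (\ref{eqn:mcktht:um-thetanullwerte}) that no such theta function has exponents in $-\tfrac{1}{120}+\ZZ$ or $\tfrac{71}{120}+\ZZ$, which the support condition on the nonzero components forces; hence $h=0$. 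Your appeal to Serre--Stark shows you have the right tool in mind, but the decisive step is this exponent check against the explicit theta basis, not a vanishing statement for cusp form spaces, so as written the uniqueness argument has a genuine gap (albeit a reparable one).
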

Here $\sigma^X:\SL_2(\ZZ)\to\GL_{60}(\CC)$ denotes the multiplier system of $S^X:=S^X_e$ (cf. (\ref{eqn:mcktht:um-sigmaX})), and $\rho_{3|3}:\Gamma_0(3)\to \CC^\times$ is defined in (\ref{eqn:mcktht:um-rho33}).

Armed with Theorem \ref{thm:intro-maintheorem}, we may now define the $H^X_g$ concretely and explicitly, for $g\in G^X$, by setting
\begin{gather}\label{eqn:intro-HXg}
	H^X_g(\tau):=2T^X_g(\tau),
\end{gather}
where $T^X_g(\tau)$ denotes the function obtained by substituting $e^{2\pi \ii \tau}$ for $q$ in the series expression (\ref{eqn:mcktht-TXg}) for $T^X_g$.

Expressions for the components of $H^X_g$ are given in \S5.4 of \cite{MUM}, in terms of fifth order mock theta functions of Ramanujan, for the cases that $o(g)=1$ and $o(g)=2$, but it is not verified there that these prescriptions define mock modular forms with the specified shadows. Our work confirms these statements, as the following theorem demonstrates. 

\begin{thm}\label{thm:intro-rammcktht}
We have the following identities.
\begin{gather}
H^{X}_{1A,1}(\tau) 
=\begin{cases}\label{eqn:intro-rammcktht1}
	\pm 2q^{-1/120} \left( \chi_0(q) - 2 \right),&\text{ if $r=\pm 1,\pm 11,\pm 19,\pm 29$,} \\ 
	\pm 2q^{71/120} \chi_1(q), &\text{ if $r=\pm 7,\pm 13,\pm 17,\pm 27$.}  
	\end{cases}\\
H^{X}_{2A,1}(\tau) 
=\begin{cases}\label{eqn:intro-rammcktht2}
	\mp 2 q^{-1/120}  \phi_0(-q), &\text{ if $r=\pm 1,\pm 11,\pm 19,\pm 29$,} \\ 
	\pm 2 q^{-49/120} \phi_1(-q), &\text{ if $r=\pm 7,\pm 13,\pm 17,\pm 27$.}  
	\end{cases}
\end{gather}
\end{thm}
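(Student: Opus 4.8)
The plan is to derive both identities as $q$-series identities for the graded traces $T^X_g$, drawing on the Zwegers identities (\ref{eqn:intro:zwegers}) and using Theorem \ref{thm:intro-maintheorem} as a cross-check that pins down the modular data. Since $H^X_g$ is \emph{defined} in this paper by $H^X_g:=2T^X_g$ (cf. (\ref{eqn:intro-HXg})), what has to be shown is that the $r$-components of $T^X_{1A}$ and $T^X_{2A}$, as evaluated in \S\ref{sec:mcktht:um} (cf. (\ref{eqn:mcktht-TXg})), reduce — up to the factor $2$, a sign, and the organization of components by the value of $r^2\bmod 120$ — to the displayed combinations of Ramanujan's functions. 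So the first step is to record the closed form of these traces produced by the construction of $V^X$ and its canonically-twisted modules (\S\ref{sec:va:cnstn}, cf. (\ref{eqn:va:cnstn-VX}), (\ref{eqn:va:cnstn-Vtw})): each nonzero component of $T^X_g$ is $\eta(\tau)^{-2}$ times a rational power of $q$ times a rank-three indefinite theta series summed over a pair of opposite cones, of exactly the shape on the right of (\ref{eqn:intro:zwegers}), and the remaining components vanish. One also records the elementary identity $(q;q)_\infty^{-2}=q^{1/12}\eta(\tau)^{-2}$.

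For $g=1A$ this closes the argument: by design of the construction the indefinite theta series attached to the nonzero components of $T^X_{1A}$ are precisely the two series appearing in (\ref{eqn:intro:zwegers}), so after unwinding the factor $2$, the signs, and the assignment of functions to residues $r$, equation (\ref{eqn:intro:zwegers}) delivers $H^X_{1A,r}=\pm 2q^{-1/120}(\chi_0(q)-2)$ for $r\equiv\pm1,\pm11,\pm19,\pm29\pmod{60}$ and $H^X_{1A,r}=\pm 2q^{71/120}\chi_1(q)$ for $r\equiv\pm7,\pm13,\pm17,\pm23\pmod{60}$, the other components being zero. As a check, $\chi_0(q)=1+O(q)$ and $\chi_1(q)=1+O(q)$ give exactly the polar behavior predicted for $H^X_{1A}$; with Theorem \ref{thm:intro-maintheorem}, which identifies $2T^X_{1A}$ as \emph{the} weight-$1/2$ mock modular form for $\SL_2(\ZZ)$ with shadow $S^X$ and those polar parts, this simultaneously confirms the prescription of \S5.4 of \cite{MUM} for $1A$, including that it has the stated shadow.

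The $2A$ case carries the real content. The graded trace $T^X_{2A}$ is obtained by inserting the order-two automorphism of $V^X$ into the character, and one finds its nonzero components are again of the form $\eta(\tau)^{-2}$ times a power of $q$ times a rank-three indefinite theta series over a pair of opposite cones, but with the characteristic shifted by the $2A$-action — heuristically, with $q$ replaced by $-q$ in the summand. It then remains to identify these with $q$-series built from $\phi_0(-q)$ and $\phi_1(-q)$; that is, to establish Zwegers-type identities of the form
\begin{equation}\label{eqn:proposal-phi}
\phi_j(-q)=\frac{q^{a_j}}{(q;q)_\infty^{2}}\Bigl(\sum_{k,l,m\geq 0}+\sum_{k,l,m<0}\Bigr)(-1)^{k+l+m}(-q)^{\,Q+\lambda_j}\qquad(j=0,1),
\end{equation}
with $Q=\tfrac12(k^2+l^2+m^2)+2(kl+lm+mk)$ as in (\ref{eqn:intro:zwegers}), $a_j\in\QQ$, and $\lambda_0,\lambda_1$ linear forms in $k,l,m$; these may be proved by running the manipulations of \cite{MR2558702} (which treat $\chi_0,\chi_1$) for $\phi_0,\phi_1$, or by extracting them from Ramanujan's fifth order mock theta conjectures (established by Hickerson), which relate $\phi_0(-q),\phi_1(-q)$ to $\chi_0,\chi_1$ modulo eta-quotients. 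Granting (\ref{eqn:proposal-phi}), one reads off $H^X_{2A,r}=\mp 2q^{-1/120}\phi_0(-q)$ on the residues $r\equiv\pm1,\pm11,\pm19,\pm29$ and $H^X_{2A,r}=\pm 2q^{-49/120}\phi_1(-q)$ on $r\equiv\pm7,\pm13,\pm17,\pm23\pmod{60}$, the rest vanishing; since $\phi_0(-q)=1-q+O(q^2)$ and $\phi_1(-q)=-q+O(q^2)$ the polar part at the infinite cusp of $\Gamma_0(2)$ is $\mp 2q^{-1/120}+O(q^{119/120})$ on the first family and trivial otherwise, as $H^X$ requires, while boundedness at the cusp $0$ and the shadow $S^X_{2A}$ are supplied by Theorem \ref{thm:intro-maintheorem}. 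This confirms the \S5.4 prescription of \cite{MUM} for $2A$ as well.

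The main obstacle is the $2A$ analogue (\ref{eqn:proposal-phi}) of (\ref{eqn:intro:zwegers}): proving it with the correct $Q$, $a_j$, and $\lambda_j$ essentially means pushing Zwegers's derivation of (\ref{eqn:intro:zwegers}) — or the $q\mapsto -q$ specialization of the fifth order mock theta conjectures — through for $\phi_0$ and $\phi_1$, and checking that the attendant cone decomposition meets the convergence hypotheses of that method. Coupled to this is the preliminary point of determining exactly how the order-two automorphism of $V^X$ acts on the generating fields, so that the signs in the $2A$-twisted indefinite theta series come out right. Both are mechanical but bookkeeping-heavy; the remaining ingredients — applying (\ref{eqn:intro:zwegers}) for $1A$, the polar-part computations, and the invocations of Theorem \ref{thm:intro-maintheorem} — are routine.
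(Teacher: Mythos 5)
The $1A$ half of your argument is fine and is essentially the paper's: the trace components (\ref{eqn:va:cnstn-Tpmeaexplicit}) were engineered so that Zwegers' identity (\ref{eqn:intro:zwegers}) applies verbatim, and (\ref{eqn:intro-rammcktht1}) drops out of the definitions (\ref{eqn:intro-HXg}) and (\ref{eqn:mcktht-TXg}).

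The $2A$ half has a genuine gap, and it sits exactly where you say the work is ``mechanical but bookkeeping-heavy.'' First, your description of $T^X_{2A}$ is not right: inserting $\hat\tau$ into the trace does not replace $q$ by $-q$ in a rank-three sum over the same cones; it collapses the lattice sum to the $\tau$-fixed sublattice and changes the oscillator factor, giving a \emph{rank-two} indefinite theta series divided by $(q^2;q^2)_\infty$, cf.\ (\ref{eqn:va:cnstn-Tpmtauaexplicit}). Consequently the target identity (\ref{eqn:proposal-phi}) you posit for $\phi_0(-q),\phi_1(-q)$ --- same rank-three form $Q$, same pair of cones, a linear shift, and $q\mapsto -q$ --- is not a known result and is not what your construction requires; what would be needed is an identity matching (\ref{eqn:va:cnstn-Tpmtauaexplicit}) against $\phi_0(-q)$ and $\phi_1(-q)$, i.e.\ precisely the identities (\ref{eqn:intro-qseriesid1})--(\ref{eqn:intro-qseriesid7}) combined with Andrews' double-sum representations (\ref{eqn:mcktht:um-phi0Hecke})--(\ref{eqn:mcktht:um-phi1Hecke}) from \cite{MR814916}. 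But in the paper those identities are Corollary \ref{cor:intro-qseriesid}: they are \emph{deduced from} Theorem \ref{thm:intro-rammcktht}, not used to prove it, and no direct proof of them is offered (nor does \cite{MR2558702} supply one for $\phi_0,\phi_1$; it treats $\chi_0,\chi_1$). So your plan for $2A$ rests on an unproven identity whose shape is misstated and whose difficulty is underestimated --- it is the nontrivial content, not bookkeeping. The paper's actual route avoids this entirely: by Proposition \ref{prop:mcktht:um-TXg}, $2T^X_{\hat\tau}$ is a mock modular form for $\Gamma_0(2)$ with shadow $S^X_{\hat\tau}$, prescribed polar part at infinity and vanishing polar part at the cusp $0$; by Zwegers' Propositions 4.10 and 4.13 (the $F_{5,2}$ components, cf.\ (\ref{eqn:mcktht:indtht-F5Fphi0})--(\ref{eqn:mcktht:indtht-F5Fphi1}) and (\ref{eqn:mcktht:indtht-F5123shadow})--(\ref{eqn:mcktht:indtht-F5124shadow})), the right-hand side of (\ref{eqn:intro-rammcktht2}) has the same shadow, multiplier and polar data; the uniqueness statement of Theorem \ref{thm:intro-maintheorem} then forces the two to coincide. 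If you want to salvage your approach, you must either prove your identity (\ref{eqn:proposal-phi}) (or Corollary \ref{cor:intro-qseriesid}) independently, or reorganize the $2A$ argument around that uniqueness step, using Theorem \ref{thm:intro-maintheorem} as the proof mechanism rather than as a cross-check.
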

The fifth order mock theta functions $\phi_0$ and $\phi_1$ were defined by Ramanujan (also in his last letter to Hardy), by setting
\begin{gather}
	\begin{split}\label{eqn:intro-phi01}
		\phi_0(q)&:=\sum_{n\geq 0} q^{n^2} {(-q;q^2)_n},\\
	\phi_1(q)&:=\sum_{n\geq 0} q^{(n+1)^2} {(-q;q^2)_n}.
	\end{split}
\end{gather}

The identities (\ref{eqn:intro-rammcktht1}) follow immediately from Theorem \ref{thm:intro-maintheorem}, since the the $V^X$-modules used to define the $T^X_g$ have been constructed specifically so as to make Zwegers' identity (\ref{eqn:intro:zwegers}) manifest. By contrast, the $o(g)=2$ case of Theorem \ref{thm:intro-rammcktht} requires some work, since the expressions we obtain naturally from our construction of $T^X_g$ do not obviously coincide with (\ref{eqn:intro-rammcktht2}). Thus the proof of Theorem \ref{thm:intro-rammcktht} entails non-trivial  $q$-series identities which may be of independent interest. 
\begin{cor}\label{cor:intro-qseriesid}
We have
\begin{gather}
\begin{split}\label{eqn:intro-qseriesid1}
&\left( \sum_{k,m \ge 0} -
\sum_{k,m <0} \right)_{k=m \pmod{2}}
(-1)^m q^{k^2/2+m^2/2+4km+k/2+3m/2} \\
&\qquad= {\prod_{n>0} (1+q^n)} \left( \sum_{k,m \ge 0} -
\sum_{k,m <0} \right) (-1)^{k+m}
q^{3 k^2+m^2/2 +4km+k+m/2},
\end{split}\\
\begin{split}\label{eqn:intro-qseriesid7}
&\left( \sum_{k,m \ge 0} -
\sum_{k,m <0} \right)_{k=m \pmod{2}}
(-1)^m q^{k^2/2+m^2/2+4km+3k/2+5m/2} \\
&\qquad= {\prod_{n>0} (1+q^n)} \left( \sum_{k,m \ge 0} -
\sum_{k,m <0} \right) (-1)^{k+m}
q^{3 k^2+m^2/2 +4km+3k+3m/2}.
\end{split}
\end{gather}
\end{cor}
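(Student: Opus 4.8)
The plan is to read the two identities as the explicit $q$-series form of the $o(g)=2$ case of Theorem~\ref{thm:intro-rammcktht}, and to deduce them from the characterization in Theorem~\ref{thm:intro-maintheorem}. Concretely, one unwinds the definition (\ref{eqn:mcktht-TXg}) of $T^X_{2A}$ as a trace on the canonically-twisted $V^X$-module (\ref{eqn:va:cnstn-Vtw}): since $2A$ acts as an order-two isometry on the rank-two lattice underlying the relevant sub-vertex operator algebra, the graded trace factors as an oscillator-and-fermion contribution, which produces exactly the product $\prod_{n>0}(1+q^n)=(-q;q)_\infty$, times a lattice theta sum over the appropriate cone, with the insertion of $2A$ restricting that sum to the sublattice $\{(k,m):k\equiv m\pmod 2\}$ and introducing the indicated sign. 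Thus the left-hand sides of (\ref{eqn:intro-qseriesid1}) and (\ref{eqn:intro-qseriesid7}) are, up to an explicit power of $q$, the components $2T^X_{2A,r}$ for $r\equiv \pm1,\pm11,\pm19,\pm29$ and $r\equiv\pm7,\pm13,\pm17,\pm27\pmod{60}$ respectively, while the right-hand sides are the Ramanujan expressions $\mp2q^{-1/120}\phi_0(-q)$ and $\pm2q^{-49/120}\phi_1(-q)$ of (\ref{eqn:intro-rammcktht2}), rewritten by means of the standard Hecke-type (indefinite theta) representations of $\phi_0$ and $\phi_1$, namely the ones attached to the quadratic form $3k^2+m^2/2+4km$ displayed on the right.

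With these identifications the corollary becomes the assertion $2T^X_{2A,r}=H^X_{2A,r}$, which I would prove via Theorem~\ref{thm:intro-maintheorem}: since $2T^X_{2A}$ is the unique vector-valued mock modular form of weight $1/2$ on $\Gamma_0(2)$ with shadow $S^X_{2A}$ and the polar parts of $H^X_{2A}$, it suffices to show that the vector with components $\mp2q^{-1/120}\phi_0(-q)$, $\pm2q^{-49/120}\phi_1(-q)$, and $O(1)$ in the remaining slots, enjoys those same three properties. The polar parts are immediate from $\phi_0(q)=1+q+\cdots$ and $\phi_1(q)=q+\cdots$. The shadow and the $\Gamma_0(2)$-automorphy are where the real work lies: one invokes the known modular completions of the fifth order mock theta functions $\phi_0$ and $\phi_1$ --- going back to Zwegers \cite{MR2558702} and the subsequent literature --- whose shadows are weight $3/2$ unary theta series, and matches these against the corresponding components of $S^X_{2A}$ (defined in \S\ref{sec:mcktht:um}).

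I expect the main obstacle to be exactly this matching: reconciling the ``fifth order'' arithmetic --- the relevant unary theta series sit at a level tied to $120$ and $60$ --- with the comparatively coarse group $\Gamma_0(2)$, and keeping careful track of which residues $r\pmod{60}$ are glued to $\phi_0(-q)$ rather than $\phi_1(-q)$, together with the attendant signs. In particular one must verify invariance, with the multiplier forced by $S^X_{2A}$, under the extra generator of $\Gamma_0(2)$ beyond $\tau\mapsto\tau+1$; this is the step at which the identities (\ref{eqn:intro-qseriesid1}) and (\ref{eqn:intro-qseriesid7}) cease to be formal and acquire genuine content.

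Alternatively --- and this yields a proof not relying on Theorem~\ref{thm:intro-maintheorem} --- one can attack the two identities head-on as $q$-series. After the substitution $k=u+v$, $m=u-v$, which is a bijection from $\ZZ^2$ onto $\{(k,m):k\equiv m\pmod 2\}$, each left-hand side becomes a Hecke-type double sum for the diagonal form $5u^2-3v^2$ with a linear twist; the standard splitting-and-symmetrization technique for such sums (group the terms according to the values of $u\pm v$, then apply the Jacobi triple product) extracts the factor $(-q;q)_\infty$ and collapses what remains onto the double sum on the right. This route is elementary but bookkeeping-heavy, and there the obstacle is to hit upon the partition of the summation lattice that produces precisely $\prod_{n>0}(1+q^n)$ with no spurious extra factor.
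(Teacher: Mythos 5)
Your overall strategy---deduce the corollary from the $o(g)=2$ case of Theorem \ref{thm:intro-rammcktht}, which in turn rests on the uniqueness statement of Theorem \ref{thm:intro-maintheorem} together with Zwegers' completions of $\phi_0$ and $\phi_1$---is indeed the paper's strategy, and your second paragraph essentially reproduces the paper's proof of that case of Theorem \ref{thm:intro-rammcktht}. However, your identification of the two sides of the identities is reversed, and the reversal conceals the one genuinely missing ingredient. The $\hat\tau$-twisted trace does not produce the parity-restricted sums on the left: inserting $\hat\tau$ kills every lattice vector not fixed by $\tau$, so the sum runs over $\mu=k(\le_1+\le_2)+m\le_3$, whose Gram matrix $\left(\begin{smallmatrix}6&4\\4&1\end{smallmatrix}\right)$ yields exactly the quadratic form $3k^2+m^2/2+4km$ appearing on the \emph{right}, and the oscillator--fermion factor in this twisted sector is $1/(q^2;q^2)_\infty$, not $\prod_{n>0}(1+q^n)$; this is the content of (\ref{eqn:va:cnstn-Tpmtauaexplicit}). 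Thus the right-hand sides are (up to prefactors) the trace functions $2T^X_{2A,r}$, not ``standard Hecke-type representations of $\phi_0,\phi_1$''; asserting that the form $3k^2+m^2/2+4km$ furnishes such a representation is circular, since---given Theorem \ref{thm:intro-rammcktht}---that assertion \emph{is} the corollary.

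What actually ties the left-hand sides to $\phi_0(-q)$ and $-q^{-1}\phi_1(-q)$ is Andrews' Hecke-type double-sum identities \cite{MR814916}, rewritten in the paper as (\ref{eqn:mcktht:um-phi0Hecke})--(\ref{eqn:mcktht:um-phi1Hecke}) with prefactor $(q;q)_\infty/(q^2;q^2)_\infty^2$; the factor $\prod_{n>0}(1+q^n)$ in the corollary is then simply the ratio $(q^2;q^2)_\infty/(q;q)_\infty$ between Andrews' prefactor and the trace prefactor, not an oscillator contribution. Your proposal never invokes Andrews' identities or any substitute for them, so the chain of equalities does not close. Your alternative purely $q$-series route (substitute $k=u+v$, $m=u-v$, then ``split and symmetrize'' to extract $(-q;q)_\infty$ via the triple product) is not what the paper does and, as you concede, its crucial step---finding the decomposition that produces exactly $(-q;q)_\infty$---is left unproved; as stated it is a hope rather than an argument, and carrying it out would amount to reproving the Andrews-type identity that the paper imports.
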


The reader who is familiar with modularity results on trace functions attached to vertex operator algebras (cf. \cite{Zhu_ModInv,Dong2000,MR2046807}) and super vertex operator algebras (cf. \cite{DonZha_MdlrOrbVOSA}) may find it surprising that the functions we construct are (generally) mock modular, rather than modular, and have weight $1/2$, rather than weight $0$. In light of Zwegers' work \cite{Zwegers,MR2558702}, it is clear that we can obtain trace functions with mock modular behavior by considering vertex algebras constructed according to the usual lattice vertex algebra construction, but with a cone (or union of cones, cf. \S\ref{sec:va:cva}) 
taking on the role usually played by a lattice. A suitably chosen cone is the main ingredient for our construction of $V^X$. A general procedure for constructing super vertex operator algebras from cones in arbitrary signature is formalized in Theorem \ref{thm:va:cva-VD}. 

Note however that the cone vertex algebra construction does not, on its own, naturally give rise to trace functions with weight $1/2$. For this we introduce a single ``free fermion'' to the cone vertex algebra that we use to construct $V^X$, and we insert the zero mode (i.e. $L(0)$-degree preserving component) of the canonically-twisted vertex operator attached to a generator when we compute graded traces on canonically-twisted modules for $V^X$. In practice, this has the effect of multiplying the cone vertex algebra trace functions by $\eta(\tau):=q^{1/24}\prod_{n>0}(1-q^n)$. 

We remark that this technique may be profitably applied to other situations. For example, it is known (cf. e.g. \cite{MR1650637}) that the moonshine module $V^\natural$, when regarded as a module for the Virasoro algebra, is a direct sum of modules $L(h,24)$, for $h$ ranging over non-negative integers, satisfying
\begin{gather}
	\tr_{L(h,24)}q^{L(0)-c/24}=\begin{cases}
		(1-q)q^{-23/24}\eta(\tau)^{-1}&\text{ for $h=0$,}\\
		q^{h-23/24}\eta(\tau)^{-1}&\text{ for $h>0$,}
		\end{cases}	
\end{gather}
where $c=24$.
Also, the multiplicity of $L(0,24)$ is $1$, and the multiplicity of $L(1,24)$ is $0$.
Consequently, the weight $1/2$ modular form $\eta(\tau)J(\tau)$, with $J(\tau)=q^{-1}+O(q)$ the 
(so normalized) elliptic modular invariant, is almost the generating function of the dimensions of the homogeneous spaces of Virasoro highest weight vectors in $V^\natural$. Indeed, the actual generating function is just $q^{1/24}\eta(\tau)J(\tau)+1$. 

Certainly $\eta(\tau)J(\tau)$ has nicer modular properties than the Virasoro highest weight generating function of $V^\natural$, and moreover, an even more striking connection to the monster, as four of the dimensions of non-trivial irreducible representations for the monster appear as coefficients:
\begin{gather}\label{eqn:intro-fourans}
\eta(\tau)J(\tau)=\cdots+196883q^{25/24}+21296876q^{49/24}+842609326q^{73/24}+19360062527q^{97/24}+\cdots
\end{gather}
(cf. p.220 of \cite{atlas}).
This function $\eta(\tau)J(\tau)$ can be obtained naturally as a trace function on a canonically-twisted module for a super vertex operator algebra. For if we take $V$ to be the tensor product of $V^\natural$ with the super vertex operator algebra obtained by applying the Clifford module construction to a one dimension vector space (see \S\ref{sec:va:cliffmod} for details), then, choosing an irreducible canonically-twisted module $V_\tw$ for $V$, and denoting by $p(0)$ the coefficient of $z^{-1}$ in the canonically-twisted vertex operator attached to a suitably scaled element $p\in V$ with $L(0)p=\frac12 p$, we have
\begin{gather}\label{eqn:intro-VetaJ}
	\tr_{V_\tw}p(0)q^{L(0)-c/24}=\eta(\tau)J(\tau),
\end{gather}
where now $c=49/2$. (See \S\ref{sec:va:cliffmod} for more detail.) 

The importance of trace functions such as (\ref{eqn:intro-VetaJ}) within the broader context of modularity for super vertex operator algebras is analyzed in detail in \cite{MR3077918}. (See also \cite{MR3205090}.)

The organization of the paper is as follows. In \S\ref{sec:va} we recall some familiar constructions from vertex algebra and use these to construct the super vertex operator algebra $V^X$, and its canonically-twisted modules $V^{X,\pm}_{\tw,a}$, which play the commanding role in this work. We recall the lattice construction of super vertex algebras in \S\ref{sec:va:latva}, modules for lattice super vertex algebras in \S\ref{sec:va:latvamod}, and the Clifford module super vertex algebra construction in \S\ref{sec:va:cliffmod}. New material appears in \S\ref{sec:va:cva}, where we attach a super vertex operator algebra to a cone in an indefinite lattice. Using this, we formulate the construction of $V^X$ and the $V^{X,\pm}_{\tw,a}$ in \S\ref{sec:va:cnstn}. We also equip these spaces with $G^X$-module structure in \S\ref{sec:va:cnstn}, and compute explicit expressions (cf. Proposition \ref{prop:va:cnstn-tracefnexpressions}) for the graded traces of elements of $G^X$. 

In \S\ref{sec:mcktht} our focus moves from representation theory to number theory, as we seek to determine the properties of the graded traces arising from the action of $G^X$ on the $V^\pm_{\tw,a}$. We recall the relationship between mock modular forms and harmonic Maass forms in \S\ref{sec:mcktht:maass}, and we recall some results on Zwegers' indefinite theta series in \S\ref{sec:mcktht:indtht}. The proofs of our main results, Theorems \ref{thm:intro-maintheorem} and \ref{thm:intro-rammcktht}, are the content of \S\ref{sec:mcktht:um}.

We give tables with the first few coefficients of the $H^X_g$ in \S\ref{sec:coeffs}.

We frequently employ the notational convention $e(x):=e^{2\pi i x}$. 

\section{Vertex Algebra}\label{sec:va}

This section begins with a review of the lattice (super) vertex algebra construction in \S\ref{sec:va:latva}, and the natural generalization of this which defines lattice vertex algebra modules in \S\ref{sec:va:latvamod}. We review the special case of the Clifford module super vertex algebra construction we require in \S\ref{sec:va:cliffmod}. We introduce cone vertex algebras in \S\ref{sec:va:cva}, and put all of the preceding material together for the construction of $V^X$, and its canonically-twisted modules, in \S\ref{sec:va:cnstn}.

\subsection{Lattice Vertex Algebra}\label{sec:va:latva}

We briefly recall, following \cite{Bor_PNAS,FLM}, the standard construction which associates a super vertex algebra $V_L$ to a central extension of an integral lattice $L$. We also employ \cite{MR2082709} as a reference. Set $\gt{h}:=L\otimes_{\ZZ}\CC$, and extend the bilinear form on $L$ to a symmetric $\CC$-bilinear form on $\gt{h}$ in the natural way. Set $\hat{\gt{h}}:=\gt{h}[t,t^{-1}]\oplus \CC {\bf c}$, for $t$ a formal variable, and define a Lie algebra structure on $\hat{\gt{h}}$ by declaring that ${\bf c}$ is central, and $[u\otimes t^m,v\otimes t^n]=m\lab u,v\rab\delta_{m+n,0}\,{\bf c}$ for $u,v\in\gt{h}$ and $m,n\in\ZZ$. We follow tradition and write $u(m)$ as a shorthand for $u\otimes t^m$. The Lie algebra $\hat{\gt{h}}$ has a triangular decomposition $\hat{\gt{h}}=\hat{\gt{h}}^-\oplus \hat{\gt{h}}^0\oplus \hat{\gt{h}}^+$ where $\hat{\gt{h}}^{\pm}:=\gt{h}[t^{\pm 1}]t^{\pm 1}$ and $\hat{\gt{h}}^0:=\gt{h}\oplus \CC{\bf c}$. 

We require a bilinear function $b:L\times L\to \ZZ/2\ZZ$ with the property that $b(\lambda,\mu)+b(\mu,\lambda)=\lab \lambda,\mu\rab+\lab\lambda,\lambda\rab\lab\mu,\mu\rab +2\ZZ$. If $\{\varepsilon_i\}$ is an ordered $\ZZ$-basis for $L$ then we may take $b$ to be the unique such function for which
\begin{gather}
	b(\le_i,\le_j)=\begin{cases}
		0&\text{when $i\leq j$,}\\
		1&\text{when $i>j$.}
		\end{cases}
\end{gather}
Set $\beta(\lambda,\mu):=(-1)^{b(\lambda,\mu)}$, and define $\CC_{\beta}[L]$ to be the ring generated by symbols $\vac_{\lambda}$ for $\lambda\in L$ subject to the relations $\vac_{\lambda}\vac_{\mu}=\beta(\lambda,\mu)\vac_{\lambda+\mu}$. 

\begin{rmk}
The algebra $\CC_{\beta}[L]$ is isomorphic to the quotient $\CC[\hat{L}]/\lab \kappa+1\rab$, where $\hat{L}$ is the unique (up to isomorphism) central extension of $L$ by $\lab \kappa\rab\simeq\ZZ/2\ZZ$ such that 
\begin{gather}
aa'=\kappa^{\lab \bar{a},\bar{a}'\rab+\lab \bar{a},\bar{a}\rab\lab \bar{a}',\bar{a}'\rab}a'a,
\end{gather} 
for $a,a'\in \hat{L}$ lying above $\bar{a},\bar{a}'\in L$, respectively. (Cf. \cite{FLM}.)
\end{rmk}

Now define a $\hat{\gt{h}}^0\oplus \hat{\gt{h}}^+$-module structure on $\CC_{\beta}[L]$ by setting ${\bf c}\vac_{\lambda}=\vac_{\lambda}$ and $u(m)\vac_{\lambda}= \delta_{m,0} \lab u,\lambda\rab\vac_{\lambda}$ for $u\in \gt{h}$ and $\lambda\in L$, and define $V_L$ to be the induced $\hat{\gt{h}}$-module, 
\begin{gather}
	V_L:=U(\hat{\gt{h}})\otimes_{U(\hat{\gt{h}}^0\oplus \hat{\gt{h}}^+)}\CC_{\beta}[L].
\end{gather}
Then, according to \S5.4.2 of \cite{MR2082709}, for example, $V_L$ admits a unique super vertex algebra structure $Y:V_L\to (\End V_L)[[z,z^{-1}]]$ such that $1\otimes \vac_{0}$ is the vacuum vector, 
\begin{gather}\label{eqn:va:latva-Yu}
	Y(u(-1)\otimes \vac_{0},z)=\sum_{n\in\ZZ} u(n)z^{-n-1}
\end{gather}
for $u\in \gt{h}$, and 
\begin{gather}\label{eqn:va:latva-Ylambda}
	Y(1\otimes \vac_{\lambda},z)=
		\exp\left(-\sum_{n<0}\frac{\lambda(n)}{n}z^{-n}\right)
		\exp\left(-\sum_{n>0}\frac{\lambda(n)}{n}z^{-n}\right)
		\vac_{\lambda}z^{\lambda(0)}
\end{gather}
for $\lambda \in L$. 
Here $\vac_{\lambda}$ denotes the operator $p\otimes \vac_{\mu}\mapsto \beta(\lambda,\mu)p\otimes \vac_{\lambda+\mu}$, and $z^{\lambda(0)}(p\otimes \vac_{\mu}):=(p\otimes \vac_{\mu}) z^{\lab \lambda,\mu\rab}$. Note that we have 
\begin{gather}
	V_L\simeq S(\hat{\gt{h}}^-)\otimes \CC[L]
\end{gather}
as modules for $\hat{\gt{h}}^-\oplus \hat{\gt{h}}^0$.

Given that $\{\le_i\}$ is a basis for $L$, choose $\le_i'\in L\otimes_{\ZZ}\QQ$ such that $\lab \le_i',\le_j\rab=\delta_{i,j}$, and define 
\begin{gather}
	\omega:=\frac{1}{2}\sum_{i=1}^3\le_i'(-1)\le_i(-1)\otimes \vac_0. 
\end{gather}
Then $\omega$ is a conformal element for $V_L$ with central charge equal to the rank of $L$. 
If we define $L(n)\in \End V_L$ so that $Y(\omega,z)=\sum_{n\in \ZZ}L(n)z^{-n-2}$ then $[L(0),v(n)]=-nv(n)$ and 
$1\otimes \vac_{\lambda}$ is an eigenvector for $L(0)$ with eigenvalue $\lab \lambda,\lambda\rab/2$. Note that we do not assume that the bilinear form on $L$ is positive-definite. Vectors of non-positive length in $L$ will give rise to infinite dimensional eigenspaces for $L(0)$, so in general $(V_L,Y,\vac_0,\omega_u)$ is a conformal super vertex algebra, but not a super vertex operator algebra.

Automorphisms of $L$ can be lifted to automorphisms of $V_L$. For suppose given $g\in \Aut(L)$ and a function $\alpha:L\to \{\pm 1\}$ satisfying
\begin{gather}\label{eqn:va:latva-alphacond}
	\alpha(\lambda+\mu)\beta(\lambda,\mu)=\alpha(\lambda)\alpha(\mu)\beta(g\lambda,g\mu)
\end{gather}
for $\lambda,\mu\in L$. Then we obtain an automorphism $\hat{g}$ of $\Aut(V_L)$ by setting
\begin{gather}\label{eqn:va:latva-hatg}
	\hat{g}(p\otimes \vac_\lambda):=\alpha(\lambda) (g\cdot p)\otimes\vac_{g\lambda},
\end{gather}
for $p\in S(\hat{\gt{h}}^-)$ and $\lambda\in L$, where $g\cdot p$ denotes the natural extension of the action of $\Aut(L)$ on $\gt{h}=L\otimes_{\ZZ}\CC$ to $S(\hat{\gt{h}}^-)$, determined by $g\cdot u(m)=(gu)(m)$ for $u\in\gt{h}$. 

For example, take 
$g$ to be the {\em Kummer involution} of $L$, given by $g\lambda=-\lambda$ for $\lambda\in L$. Then $\beta(\lambda,\mu)=\beta(-\lambda,-\mu)$ for all $\lambda,\mu\in L$, since $\beta$ is bi-multiplicative, so we may take $\alpha\equiv 1$ in (\ref{eqn:va:latva-alphacond}). We denote the corresponding automorphism of $V_L$ by $\theta$, and note that the action of $\theta$ on $V_L$ is given explicitly as follows. If $p\in S(\hat{\gt{h}}^-)$ is a homogeneous polynomial of degree $k$ in variables $u_i(-m_i)$, where $u_i\in\gt{h}$ and the $m_i$ are positive integers, then 
\begin{gather}\label{eqn:va:latva-theta}
	\theta(p\otimes v_{\lambda})=(-1)^k p\otimes v_{-\lambda}.
\end{gather}

\subsection{Lattice Vertex Algebra Modules}\label{sec:va:latvamod}

Let $\gamma$ be an element of the dual lattice $L^*:=\{\lambda\in L\otimes_{\ZZ}\QQ\mid \lab \lambda,L\rab\subset\ZZ\}$. Define $\CC_{\beta}[L+\gamma]$ to be the complex vector space generated by symbols $\vac_{\mu+\gamma}$ for $\mu\in L$,
regarded as an $\CC_{\beta}[L]$-module according to the rule $\vac_{\lambda}\cdot\vac_{\mu+\gamma}=\beta(\lambda,\mu)\vac_{\lambda+\mu+\gamma}$. Equip $\CC_{\beta}[L+\gamma]$ with an $U(\hat{\gt{h}}^0\oplus \hat{\gt{h}}^+)$-module structure much as before, by letting ${\bf c}\vac_{\mu+\gamma}=\vac_{\mu+\gamma}$ and $u(m)\vac_{\mu+\gamma}=\delta_{m,0}\lab u,\mu+\gamma\rab \vac_{\mu+\gamma}$ for $u\in\gt{h}$ and $\mu\in L$. Let $V_{L+\gamma}$ be the $\hat{\gt{h}}$-module defined by setting $V_{L+\gamma}:=U(\hat{\gt{h}})\otimes_{U(\hat{\gt{h}}^0\oplus \hat{\gt{h}}^+)}\CC_{\beta}[L+\gamma]$. Then we have an isomorphism
\begin{gather}
	V_{L+\gamma}\simeq S(\hat{\gt{h}}^-)\otimes \CC[L+\gamma]
\end{gather}
of modules for $\hat{\gt{h}}^-$. Define vertex operators $Y_{\gamma}:V_L\to(\End V_{L+\gamma})[[z,z^{-1}]]$ using the same formulas as before but interpret the operator $\vac_{\lambda}$ in (\ref{eqn:va:latva-Ylambda}) as $\vac_{\lambda}(p\otimes \vac_{\mu+\gamma})=\beta(\lambda,\mu)p\otimes \vac_{\lambda+\mu+\gamma}$, according to the $\CC_{\beta}[L]$-module structure on $\CC_{\beta}[L+\gamma]$ prescribed above. Note that the construction of $V_{L+\gamma}$ depends upon the choice of coset representative $\gamma\in L^*$, so that $V_{L+\gamma}$ might be different from $V_{L+\gamma'}$, as a $\CC_{\beta}[L]$-module, for example, even when $L+\gamma=L+\gamma'$, but different choices of coset representative are guaranteed to define isomorphic $V_L$-modules according to \cite{MR1245855}.

The 
construction just described may be generalized so as to realize certain twisted modules for $V_L$. We give a brief description here, and refer to \S3 of \cite{MR1284796} for more details. 

Choose a vector $h\in\gt{h}$. Then for $p\in  S(\hat{\gt{h}}^-)$ and $\lambda\in L$ we have $h(0)p\otimes \vac_{\lambda}=\lab h,\lambda\rab p\otimes\vac_{\lambda}$. So if $h$ is chosen to lie in 
$L\otimes_{\ZZ}\QQ$ then 
\begin{gather}\label{eqn:va:latvamod-sigmah}
g_h:=e^{2\pi i h(0)}
\end{gather} 
is a finite order automorphism of $V_L$, which acts as multiplication by $e^{2\pi i \lab h,\lambda\rab}$ on the vector $p\otimes \vac_{\lambda}$. The kernel of the map $L\otimes_{\ZZ}\QQ\to \Aut(V_L)$ given by $h\mapsto g_h$ is exactly $L^*$, so $(L\otimes_{\ZZ}\QQ)/L^*$ is naturally a group of automorphisms of $V_L$. We may construct all the corresponding twisted modules for $V_L$ explicitly.

To do this choose an $h$ in $L\otimes_{\ZZ}\QQ$ and let $\CC[L+h]$ be the complex vector space generated by symbols $\vac_{\lambda+h}$ for $\lambda\in L$. Just as before, we define a $U(\hat{\gt{h}}^0\oplus \hat{\gt{h}}^+)$-module structure on $\CC[L+h]$ by setting ${\bf c}\vac_{\mu}=\vac_{\mu}$ and $u(m)\vac_{\mu}=\delta_{m,0}\lab u,\mu\rab \vac_{\mu}$ for $u\in\gt{h}$ and $\mu\in L+h$. Let $V_{L+h}$ be the $\hat{\gt{h}}$-module defined by setting $V_{L+h}:=U(\hat{\gt{h}})\otimes_{U(\hat{\gt{h}}^0\oplus \hat{\gt{h}}^+)}\CC[L+h]$, so that we have an isomorphism
\begin{gather}
	V_{L+h}\simeq S(\hat{\gt{h}}^-)\otimes \CC[L+h]
\end{gather}
of modules for $\hat{\gt{h}}^-$. Taking $M$ to be a positive integer such that $Mh\in L^*$, define vertex operators $Y_h:V_L\to(\End V_{L+h})[[z^{1/M},z^{-1/M}]]$ using the same formulas as before but interpret the operator $\vac_{\lambda}$ in (\ref{eqn:va:latva-Ylambda}) as $\vac_{\lambda}(p\otimes \vac_{\mu+h})=\beta(\lambda,\mu)p\otimes \vac_{\lambda+\mu+h}$. Then $V_{L+h}=(V_{L+h},Y_{h})$ is an irreducible $g_h$-twisted module for $V_L$, and any $g_h$-twisted module for $V_L$ is of the form $V_{L+h'}$ for some $h'\in L\otimes_{\ZZ}\QQ$ that is congruent to $h$ modulo $L^*$.

Note that the action of $L\otimes_{\ZZ}\QQ$ on $V_L$, given by $h\mapsto g_{h}$, extends to the $g_{h'}$-twisted module $V_{L+h'}$, for $h'\in L\otimes_{\ZZ}\QQ$. For given $h,h'\in L\otimes_{\ZZ}\QQ$, we may define
\begin{gather}\label{eqn:va:latvamod-sigmahtw}
	g_{h}(p\otimes \vac_{\lambda+h'}):=e^{2\pi i \lab h,\lambda\rab}(p\otimes \vac_{\lambda+h'})
\end{gather}
for $p\in S(\hat{\gt{h}}^-)$ and $\lambda\in L$. Then we have $g_h Y_{h'}(a,z)b=Y_{h'}(g_h a,z)g_h b$ for $a\in V_L$ and $b\in V_{L+h'}$.

\subsection{Clifford Module Vertex Algebra}\label{sec:va:cliffmod}

We also require the standard procedure---see \cite{MR1123265} for a general treatment, and \cite{MR1372717} for the special, one-dimensional case we consider here---which attaches a Clifford module super vertex operator algebra to a vector space equipped with a symmetric bilinear form. 

So let $\gt{p}$ 
be a one dimensional complex vector space equipped with a non-degenerate symmetric bilinear form $\lab\cdot\,,\cdot\rab$. Set $\hat{\gt{p}}=\gt{p}[t,t^{-1}]t^{1/2}$ and write $a(r)$ for $a\otimes t^r$. Extend the bilinear form from $\gt{p}$ to $\hat{\gt{p}}$ by requiring that $\lab a(r),b(s)\rab=\lab a,b\rab\delta_{r+s,0}$. 
Set $\hat{\gt{p}}^{\pm}=\gt{p}[t^{\pm 1}]t^{\pm 1/2}$, write $\lab\hat{\gt{p}}^{\pm}\rab$ for the sub algebra of $\Cliff(\hat{\gt{p}})$ generated by $\hat{\gt{p}}^{\pm}$ and define a one-dimensional $\lab \hat{\gt{p}}^+\rab$-module $\CC\vac$ by requiring that ${\bf 1}\vac=\vac$ and $a(r)\vac=0$ for $a\in \gt{p}$ and $r>0$. Here $\Cliff(\hat{\gt{p}})$ denotes the {\em Clifford algebra} attached to $\hat{\gt{p}}$, which we take to be the quotient of the tensor algebra $T(\hat{\gt{p}})=\CC{\bf 1}\oplus \hat{\gt{p}}\oplus \hat{\gt{p}}^{\otimes 2}\oplus \cdots$ by the ideal generated by expressions of the form $u\otimes u+\frac{1}{2}\lab u,u\rab{\bf 1}$ for $u\in \hat{\gt{p}}$.

Observe that the induced $\Cliff(\hat{\gt{p}})$-module, $A(\gt{p})=\Cliff(\hat{\gt{p}})\otimes_{\lab \hat{\gt{p}}^+\rab}\CC\vac$, is isomorphic to $\bigwedge(\hat{\gt{p}}^-)\vac$ as a $\lab \hat{\gt{p}}^-\rab$-module. 
We obtain a super vertex algebra structure on $A(\gt{p})$ 
by setting 
\begin{gather}
Y(a(-1/2)\vac,z)=\sum_{n\in\ZZ}a(n+1/2)z^{-n-1}
\end{gather}
for $a\in \gt{p}$, 
for the reconstruction theorem of \cite{MR2082709} ensures that this rule extends uniquely to a super vertex algebra structure $Y:A(\gt{p})\otimes A(\gt{p})\to A(\gt{p})((z))$ with $Y(\vac,z)=\Id$. 

Let $p\in\gt{p}$ such that $\lab p,p\rab=-2$. 
We obtain a super vertex operator algebra structure, with central charge $c=1/2$, by taking
\begin{gather}
	\omega=\frac{1}{4}p(-3/2)p(-1/2)\vac
\end{gather}
to be the Virasoro element.

To construct canonically-twisted modules for $A(\gt{p})$ set $\hat{\gt{p}}_{\tw}=\gt{p}[t,t^{-1}]$ and extend the bilinear form from $\gt{p}$ to $\hat{\gt{p}}_{\tw}$ as before by requiring that $\lab a(r),b(s)\rab=\lab a,b\rab\delta_{r+s,0}$. Set $\hat{\gt{p}}_{\tw}^{>}=\gt{p}[t]t$ and $\hat{\gt{p}}_{\tw}^{\leq}=\gt{p}[t^{-1}]$, and define a $1$-dimensional $\lab \hat{\gt{p}}_{\tw}^>\rab$-module $\CC\vac_{\tw}$ by requiring, much as before, that ${\bf 1}\vac_{\tw}=\vac_{\tw}$ and $a(r)\vac=0$ for $a\in \gt{p}$ and $r>0$. Then for the induced $\Cliff(\hat{\gt{p}}_{\tw})$-module, 
\begin{gather}
A(\gt{p})_{\tw}:=\Cliff(\hat{\gt{p}}_{\tw})\otimes_{\lab \hat{\gt{p}}_{\tw}^>\rab}\CC\vac_{\tw},
\end{gather}
there is a unique linear map $Y_{\tw}:A(\gt{p})\otimes A(\gt{p})_{\tw}\to A(\gt{p})_{\tw}((z^{1/2}))$ such that 
\begin{gather}\label{eqn:Ytwu}
Y_{\tw}(u(-1/2)\vac,z)=\sum_{n\in \ZZ}u(n)z^{-n-1/2}
\end{gather}
for $u\in\gt{p}$, and $(A(\gt{p})_{\tw},Y_{\tw})$ is a canonically-twisted module for $A(\gt{p})$. Again one may use (a suitably modified formulation of) the reconstruction theorem of \cite{MR2082709} to see this (cf. \cite{MR2074176}). We refer to \cite{MR1372717} for a concrete and detailed description of $Y_{\tw}$. 
Note that $A(\gt{p})$ is isomorphic to $\bigwedge(\hat{\gt{p}}_{\tw}^{\leq})\vac$ as a $\lab \hat{\gt{p}}_{\tw}^{\leq}\rab$-module.

With $p\in\gt{p}$ as above, such that $\lab p,p\rab=-2$, we have $p(0)^2={\bf 1}$ in $\Cliff(\gt{p})$. Set 
\begin{gather}
\vac_\tw^\pm:=({\bf 1}\pm p(0))\vac_\tw,
\end{gather} 
so that $p(0)\vac_\tw^\pm=\pm\vac_\tw^\pm$. Then $A(\gt{p})_\tw=A(\gt{p})_\tw^+\oplus A(\gt{p})_\tw^-$ is a decomposition of $A(\gt{p})_\tw$ into irreducible canonically-twisted $A(\gt{p})$-modules, where $A(\gt{p})_\tw^\pm$ denotes the sub module of $A(\gt{p})_\tw$ generated by $\vac_\tw^\pm$.
\begin{gather}
A(\gt{p})_{\tw}^\pm:=\Cliff(\hat{\gt{p}}_{\tw})\otimes_{\lab \hat{\gt{p}}_{\tw}^>\rab}\CC\vac_{\tw}^\pm
\end{gather}
From (\ref{eqn:Ytwu}) we see that the $L(0)$-degree preserving component of $Y_\tw(p(-1/2)\vac,z)$ is $p(0)$. Computing the graded-trace of $p(0)$ on $A(\gt{p})_\tw^{\pm}$, we find
\begin{gather}\label{eqn:va:cliffmod-trpAtw}
	\tr_{A(\gt{p})_\tw^{\pm}}p(0)q^{L(0)-c/24}=\pm q^{1/24}\prod_{n>0}(1-q^n),
\end{gather}
where the factor $q^{1/24}$ appears because $L(0)\vac_\tw^{\pm}=\frac{1}{16}\vac_\tw^{\pm}$ and $c=1/2$.

\subsection{Cone Vertex Algebra}\label{sec:va:cva}

Let $L$ be an integral lattice as before, and suppose $\{\le_i\}$ is a $\ZZ$-basis for $L$. 
Define $P$ to be the monoid of non-negative rational combinations of the chosen basis vectors $\le_i$, 
\begin{gather}
	P:=\left\{\sum_i \alpha_i\le_i\in L\otimes_{\ZZ}\QQ\mid \alpha_i\geq 0,\,\forall i\right\},
\end{gather}
and define $N$ to be the semigroup of strictly negative rational combinations of the $\le_i$,
\begin{gather}
	N:=\left\{\sum_i \alpha_i\le_i\in L\otimes_{\ZZ}\QQ\mid \alpha_i< 0,\,\forall i \right\}.
\end{gather}
Define $D:=P\cup N$ to be the union of $P$ and $N$. 
Our goal in this section is to attach a vertex algebra structure to the intersection $D\cap L$. For convenience we use the abbreviated notation $D(L):=D\cap L$, and more generally 
\begin{gather}\label{eqn:va:cva-DLgamma}
D(L+\gamma):=D\cap (L+\gamma)
\end{gather} 
for $\gamma\in L\otimes_{\ZZ}\QQ$. We interpret the notations $P(L+\gamma)$ and $N(L+\gamma)$ similarly.

Given $K\subset L$ write $V_K$ for the $\hat{\gt{h}}$-submodule of $V_L$ generated by the $\vac_{\lambda}$ for $\lambda\in K$,
\begin{gather}
	V_K\simeq S(\hat{\gt{h}}^-)\otimes \CC[K].
\end{gather}
Observe that if $K\subset L$ is closed under addition and contains $0$---i.e. if $K$ is a submonoid of $L$---then $V_K$ is a sub super vertex algebra of $V_L$, and $\omega$ is a conformal element for $V_K$. Furthermore, if $K'\subset L$ satisfies $K+K'\subset K'$ then the restriction of the vertex operators $a\otimes b\mapsto Y(a,z)b$ 
to $V_{K}\otimes V_{K'}<V_L\otimes V_L$ equips $V_{K'}$ with a module structure over $V_K$. 

So in particular, $V_{P(L)}$ (cf. (\ref{eqn:va:cva-DLgamma})) is a conformal super vertex algebra. If the basis $\{\le_i\}$ is chosen so that $P$ has no non-trivial vectors with non-positive length squared, then the eigenspaces for the action of $L(0)$ on $V_{P(L)}$ are finite-dimensional, the eigenvalues of $L(0)$ are contained in $\frac{1}{2}\ZZ$ and bounded from below, and thus $V_{P(L)}$ is a super vertex operator algebra. 

We will now show that the super vertex algebra structure on $V_{P(L)}$ extends naturally to $V_{D(L)}=V_{P(L)}\oplus V_{N(L)}$. 
For this we require a $V_{P(L)}$-module structure on $V_{N(L)}$, which we achieve by implementing the following standard method (cf. e.g. \S2 of \cite{MR1284796}).

Suppose that $g$ is an automorphism of a super vertex algebra $V=(V,Y,\vac)$ and, $g_M\in \GL(M)$ is a linear automorphism of a $V$-module $M=(M,Y_M)$ satisfying $g_MY_M(a,z)m=Y_M(ga,z)g_Mm$ for $a\in V$ and $m\in M$. Observe then that we obtain a new $V$-module structure $M^g:=(M,Y_M^g)$ on the vector space underlying $M$ by setting 
\begin{gather}\label{eqn:va:cva-Ymg}
Y_M^g(a,z)m:=g_MY_M(a,z)g_M^{-1}m
\end{gather}
for $a\in V$ and $m\in M$. Indeed, we have $Y_M^g(a,z)=Y_M(ga,z)$. 

Now take $M=V=V_L$ and $g=g_M=\theta$ in (\ref{eqn:va:cva-Ymg}), where $\theta\in \Aut(V_L)$ is the involution defined in \S\ref{sec:va:latva}, determined by requiring that $\theta(1\otimes \vac_{\lambda})=1\otimes \vac_{-\lambda}$ for $\lambda\in L$, and $[\theta,u(m)]=-u(m)$ for $u\in \gt{h}$ (cf. (\ref{eqn:va:latva-theta})). Observe that $\theta$ maps $V_{N(L)}$ to $V_{(-N)\cap L}$ which is a subspace of $V_{P(L)}$. Since 
\begin{gather}
P+(-N)=\{\lambda+\mu\mid \lambda\in P,\,\mu\in -N\}
\end{gather}
is a subset of $-N$, 
the space $V_{(-N)\cap L}$ is even a $V_{P(L)}$-submodule of $V_{P(L)}$, so we obtain a $V_{P(L)}$-module structure on $V_{N(L)}$ by restricting the map $a\otimes b\mapsto Y^{\theta}(a,z)b$ to $V_{P(L)}\otimes V_{N(L)}$. Note that $Y^{\theta}(a,z)b=\theta Y(a,z)\theta b=Y(\theta a,z)b$.

For a vertex algebra structure on $V_{D(L)}$ we must also identify maps $V_{N(L)}\otimes V_{P(L)}\to V_{N(L)}((z))$ and $V_{N(L)}\otimes V_{N(L)}\to V_{P(L)}((z))$. For the first of these we use $\widetilde{Y}(a,z)b:=Y(a,z)\theta b$. For the second we set $\widetilde{Y}(a,z)b:=\theta Y(a,z)b=Y(\theta a, z)\theta b$. To summarize, we define a vertex operator correspondence $\widetilde{Y}:V_{D(L)}\otimes V_{D(L)}\to V_{D(L)}((z))$, by setting
\begin{gather}\label{eqn:va:cva-vdvops}
	\widetilde{Y}(a,z)b
	:=\begin{cases}
	Y(a,z)b,&\text{ for $a,b\in V_{P(L)}$,}\\
	Y(\theta a,z)b,&\text{ for $a\in V_{P(L)}$ and $b\in V_{N(L)}$,}\\
	Y(a,z)\theta b,&\text{ for $a\in V_{N(L)}$ and $b\in V_{P(L)}$,}\\
	\theta Y(a,z)b,&\text{ for $a,b\in V_{N(L)}$,}\\
	\end{cases}
\end{gather}
where $Y$ denotes the usual vertex operator correspondence on $V_L$, determined by (\ref{eqn:va:latva-Yu}) and (\ref{eqn:va:latva-Ylambda}).

\begin{thm}\label{thm:va:cva-VD}
The four-tuple $(V_{D(L)},\widetilde{Y},\vac,\omega)$ is a conformal super vertex algebra. It is a super vertex operator algebra if $D$ has no non-trivial vectors of non-positive length squared.
\end{thm}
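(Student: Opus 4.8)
The plan is to verify the super vertex algebra axioms for $(V_{D(L)},\widetilde{Y},\vac,\omega)$ by leveraging, as much as possible, the fact that $V_{D(L)}$ sits inside $V_L$ and that $\widetilde{Y}$ is obtained from the known vertex operator correspondence $Y$ on $V_L$ by inserting the involution $\theta$ in a way dictated by the $\ZZ/2\ZZ$-grading $V_{D(L)}=V_{P(L)}\oplus V_{N(L)}$. First I would record the easy structural facts: $V_{P(L)}$ is already a sub super vertex algebra of $V_L$ with conformal element $\omega$ (this is stated in the excerpt, since $P(L)$ is a submonoid of $L$), and the discussion preceding (\ref{eqn:va:cva-vdvops}) already establishes that the four prescribed restrictions of $\widetilde{Y}$ land in the correct spaces — the key point being that $P+(-N)\subset -N$ and $(-N)\cap L\subset P(L)$, together with the analogous closure statements $N+P\subset N$ and $N+N\subset P$ that hold because $P$ and $N$ are (semi)groups of rational combinations with the stated sign conditions. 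The vacuum axioms $\widetilde{Y}(\vac,z)=\Id$ and $\widetilde{Y}(a,z)\vac\in a+zV_{D(L)}[[z]]$ are immediate from the corresponding properties of $Y$ on $V_L$ together with $\theta\vac=\vac$ and $\theta$ acting as the identity on the $L(0)$-degree-zero part, so these require only a line.

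The substantive axiom to check is locality (equivalently, the Jacobi identity / weak commutativity): for all $a,b\in V_{D(L)}$ there should exist $N\gg 0$ with $(z_1-z_2)^N\widetilde{Y}(a,z_1)\widetilde{Y}(b,z_2)c=\pm(z_1-z_2)^N\widetilde{Y}(b,z_2)\widetilde{Y}(a,z_1)c$ for all $c$, with the sign governed by the super-grading. The strategy here is to reduce each of the eight homogeneous cases (choices of $P(L)$ or $N(L)$ membership for each of $a$, $b$, $c$) to locality of $Y$ on $V_L$, which we may assume. The mechanism is that $\theta$ is a super vertex algebra automorphism of $V_L$, so $\theta Y(a,z_1)\theta^{-1}=Y(\theta a,z_1)$, and conjugating a composition $\widetilde Y(a,z_1)\widetilde Y(b,z_2)$ acting on a vector in $V_{P(L)}$ or $V_{N(L)}$ by an appropriate power of $\theta$ turns it into an honest composition of $Y$-operators on $V_L$ (possibly with some arguments replaced by their $\theta$-images) applied to the same or $\theta$-transformed vector. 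Since $\theta$ is invertible, the desired (anti)commutativity for $\widetilde Y$ after multiplication by $(z_1-z_2)^N$ is equivalent to the same identity for $Y$ on $V_L$, which holds. One must just bookkeep: in the mixed cases the "inserted" $\theta$'s on $a$, $b$, $c$ come out consistently on both sides of the equation because $\theta^2=1$ and because the grading is additive ($P(L)$ is the "even/trivial" part in the relevant sense and $N(L)$ the "odd/nontrivial" part of a $\ZZ/2\ZZ$-grading, with $\widetilde Y$ inserting exactly one $\theta$ for each $N(L)$-input among $a$ and among $\{b,c\}$ paired up appropriately).

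The conformal element axioms — that $\omega$ generates a copy of the Virasoro algebra of the stated central charge, that $L(-1)=\widetilde{Y}(\omega,z)$'s residue acts as the translation operator, and that $L(0)$ is diagonalizable — follow because $\omega\in V_{P(L)}$, so $\widetilde{Y}(\omega,z)$ acts on $V_{P(L)}$ exactly as $Y(\omega,z)$ does, and on $V_{N(L)}$ it acts as $Y(\theta\omega,z)=Y(\omega,z)$ since $\theta\omega=\omega$ (as $\omega$ is a degree-$2$ element even in the $u(-1)$'s, indeed $\theta$ fixes it by (\ref{eqn:va:latva-theta})). Hence the Virasoro relations and $[L(0),\widetilde Y(v,z)]$-type identities on all of $V_{D(L)}$ reduce to the already-known facts on $V_L$. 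Finally, for the last sentence of the theorem: if $D$ has no nontrivial vectors of non-positive length squared, then for every $\lambda\in D(L)\setminus\{0\}$ we have $\lab\lambda,\lambda\rab>0$, so the $L(0)$-eigenvalue $\lab\lambda,\lambda\rab/2$ of $\vac_\lambda$ is positive; combined with the fact that $S(\hat{\gt h}^-)$ contributes only non-negative integer shifts and is finite-dimensional in each degree, the $L(0)$-eigenspaces of $V_{D(L)}$ are finite-dimensional with eigenvalues in $\tfrac12\ZZ$ bounded below, which is exactly the super vertex operator algebra condition. The main obstacle I anticipate is the careful case analysis in the locality verification — making sure the $\theta$-insertions balance on both sides in all eight cases — but this is bookkeeping rather than a genuine difficulty, since each case collapses to locality on $V_L$ after conjugating by a power of the involution $\theta$.
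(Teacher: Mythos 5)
Your proposal is correct and matches the paper's approach: the paper's proof is only a sketch, observing that the twisted sum $\tilde{+}$ makes $D$ a commutative monoid and leaving the reduction of the axioms to those of $V_L$ as a ``standard exercise,'' which is precisely the $\theta$-conjugation bookkeeping you describe (your treatment of the vacuum, conformal element, and grading conditions is the intended routine part). One small caveat: the literal containments $N+P\subset N$ and $N+N\subset P$ are false as Minkowski sums (indeed $N+N\subset N$); what your case analysis actually uses, and what is true, are the twisted closures $N+(-P)\subset N$ and $-(N+N)\subset P$, i.e.\ closure of $D$ under the monoid operation $\tilde{+}$ highlighted in the paper.
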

\begin{proof}
The proof is a standard exercise in lattice vertex algebra computations. The fundamental reason that the construction works is the fact that we obtain a commutative monoid structure $\tilde{+}$ 
on $D$ when we define
\begin{gather}
	\lambda\tilde{+}\mu
	:=\begin{cases}
	\lambda+\mu,&\text{ for $\lambda,\mu\in P$,}\\
	-\lambda+\mu,&\text{ for $\lambda\in P$ and $\mu\in N$,}\\
	\lambda-\mu,&\text{ for $\lambda\in N$ and $\mu\in P$,}\\
	-\lambda-\mu,&\text{ for $\lambda,\mu\in N$.}\\
	\end{cases}
\end{gather}
The remaining details are left to the reader.
\end{proof}

Observe that the decomposition $V_{D(L)}=V_{P(L)}\oplus V_{N(L)}$ determines a $\ZZ/2$-grading on $V_{D(L)}$. We call this the {\em sign grading}, and we define the {\em sign automorphism} of $V_{D(L)}$ to be the linear map $s:V_{D(L)}\to V_{D(L)}$ determined by setting
\begin{gather}\label{eqn:va:cva-signaut}
	s(a):=\begin{cases}
		a,&\text{ when $a\in V_{P(L)}$,}\\
		-a,&\text{ when $a\in V_{N(L)}$.}
		\end{cases}
\end{gather}
It follows easily from the definition (\ref{eqn:va:cva-vdvops}) of the super vertex algebra structure on $V_{D(L)}$ that $s$ is indeed an automorphism of $V_{D(L)}$.

We can 
construct certain twisted (and untwisted) modules for $V_{D(L)}$, by suitably modifying the constructions recalled in \S\ref{sec:va:latvamod}. Namely, for $h\in L\otimes_{\ZZ}\QQ$ take $V_{D(L+h)}$ to be the $\hat{\gt{h}}$-module defined by setting $V_{D(L+h)}:=U(\hat{\gt{h}})\otimes_{U(\hat{\gt{h}}^0\oplus \hat{\gt{h}}^+)}\CC[D(L+h)]$, where $\CC[D(L+h)]$ is the complex vector space generated by symbols $\vac_{\mu}$ for $\mu\in D\cap(L+h)$, regarded as a $U(\hat{\gt{h}}^0\oplus \hat{\gt{h}}^+)$-module by setting ${\bf c}\vac_{\mu}=\vac_{\mu}$ and $u(m)\vac_{\mu}=\delta_{m,0}\lab u,\mu\rab \vac_{\mu}$ for $u\in\gt{h}$ and $\mu\in D\cap(L+h)$. As usual, we have an isomorphism
\begin{gather}
	V_{D(L+h)}\simeq S(\hat{\gt{h}}^-)\otimes \CC[D(L+h)]
\end{gather}
of modules for $\hat{\gt{h}}^-$. Taking $M$ to be a positive integer such that $Mh\in L^*$, define vertex operators $\widetilde{Y}_h:V_{D(L)}\to(\End V_{D(L+h)})[[z^{1/M},z^{-1/M}]]$ using (\ref{eqn:va:latva-Yu}), (\ref{eqn:va:latva-Ylambda}) and (\ref{eqn:va:cva-vdvops}), but interpret the operator $\vac_{\lambda}$ in (\ref{eqn:va:latva-Ylambda}) as $\vac_{\lambda}(p\otimes \vac_{\mu+h})=\beta(\lambda,\mu)p\otimes \vac_{\lambda+\mu+h}$. 
\begin{thm}\label{thm:va:cva-VDh}
Let $h\in L\otimes_{\ZZ}\QQ$. Then the pair $(V_{D(L+h)},\widetilde{Y}_h)$ is a $g_h$-twisted module for $V_{D(L)}$. In particular, $(V_{D(L+h)},\widetilde{Y}_h)$ is a $V_{D(L)}$-module when $h\in L^*$.
\end{thm}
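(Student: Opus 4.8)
\emph{Proof proposal.}
The plan is to verify directly that $(V_{D(L+h)},\widetilde{Y}_h)$ satisfies the defining axioms of a $g_h$-twisted module for $V_{D(L)}$, reducing each axiom to the corresponding property, already recorded in \S\ref{sec:va:latvamod}, of the lattice vertex algebra $V_L$ and its $g_h$-twisted module $(V_{L+h},Y_h)$. The key observation is that the cone vertex algebra structure $\widetilde{Y}$ on $V_{D(L)}$ and the operators $\widetilde{Y}_h$ are both obtained from the ordinary lattice operators $Y$ (respectively $Y_h$) by inserting copies of the Kummer involution $\theta$ according to the sign grading, as dictated by (\ref{eqn:va:cva-vdvops}); so the argument runs in parallel with the (omitted) proof of Theorem \ref{thm:va:cva-VD}, the only genuinely new ingredient being that the underlying lattice module is twisted rather than untwisted. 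Throughout I would use that $\theta^{2}=\Id$, that $\theta\in\Aut(V_L)$ so $\theta Y(a,z)\theta^{-1}=Y(\theta a,z)$, and that $\theta u(m)\theta^{-1}=-u(m)$ for all $m$, whence $\theta g_h\theta^{-1}=g_{-h}=g_h^{-1}$. (The very definition of $\widetilde{Y}_h$, as well as the step below identifying the relevant automorphism of $V_{D(L)}$, tacitly uses that $2h\in L^{*}$, which holds automatically when $h\in L^{*}$; for the cases entering the construction of $V^X$ this is always so.)

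First I would dispose of the routine axioms. A short computation from (\ref{eqn:va:cva-vdvops}), from $g_h\in\Aut(V_L)$, and from $\theta g_h\theta^{-1}=g_h^{-1}$ shows that $g_h$ induces an automorphism of $(V_{D(L)},\widetilde{Y})$. The vacuum axiom is immediate, since $\vac=\vac_{0}\in V_{P(L)}$ and hence $\widetilde{Y}_h(\vac,z)=Y_h(\vac_{0},z)=\Id$. For the $g_h$-grading axiom I would decompose $a\in V_{D(L)}$ as $a_{P}+a_{N}$ with $a_{P}\in V_{P(L)}$ and $a_{N}\in V_{N(L)}$; since $\theta a_{N}$ lies in the $g_h$-eigenspace conjugate to that of $a_{N}$, the condition on which powers of $z$ appear in $\widetilde{Y}_h(a,z)$ follows from the corresponding condition for $Y_h$ on $V_{L+h}$ together with (\ref{eqn:va:cva-vdvops}). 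Likewise, the truncation property $\widetilde{Y}_h(a,z)b\in V_{D(L+h)}((z^{1/M}))$ follows from that of $Y_h$, since applying $\theta$ to an argument or to the output of $Y_h$ does not change lower truncation.

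The substance is the $g_h$-twisted Jacobi identity. I would fix homogeneous $a,b\in V_{D(L)}$ and $c\in V_{D(L+h)}$, and split into the eight cases determined by whether each of $a,b$ lies in $V_{P(L)}$ or $V_{N(L)}$ and whether $c$ lies in $V_{P(L+h)}$ or $V_{N(L+h)}$. In each case I would rewrite every occurrence of $\widetilde{Y}_h$, and of the inner product $\widetilde{Y}$, via (\ref{eqn:va:cva-vdvops}), converting the three-term identity into an identity among the operators $Y$ and $Y_h$ with $\theta$ applied to some subset of $\{a,b,c\}$ and possibly to the output. By the commutative-monoid structure $\widetilde{+}$ on $D$ isolated in the proof of Theorem \ref{thm:va:cva-VD}, which satisfies $P\,\widetilde{+}\,P,\ N\,\widetilde{+}\,N\subseteq P$ and $P\,\widetilde{+}\,N\subseteq N$, all three terms land in the same summand of $V_{D(L+h)}$, and the pattern of inserted $\theta$'s is mutually consistent precisely because $P\mapsto 0$, $N\mapsto 1$ is a monoid homomorphism $D\to\ZZ/2$. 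Moving the $\theta$'s through vertex operators by $\theta Y(u,z)\theta^{-1}=Y(\theta u,z)$ and collapsing $\theta^{2}=\Id$, each of the eight identities reduces to the $g_h$-twisted Jacobi identity already known for $(V_{L+h},Y_h)$ over $V_L$, applied to the appropriate $\theta$-modified arguments; the all-$P$ case is nothing but the restriction of that structure to the sub vertex algebra $V_{P(L)}\subseteq V_L$ acting on $V_{P(L+h)}$. The concluding assertion of the theorem, that $\widetilde{Y}_h$ defines an untwisted $V_{D(L)}$-module when $h\in L^{*}$, is then the special case $M=1$, $L+h=L$, $g_h=\Id$, and the general, non-homogeneous case follows by bilinearity.

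The step I expect to be the main obstacle is this eightfold case analysis, for the same reason that the proof of Theorem \ref{thm:va:cva-VD} has content: one must keep straight three interacting sources of signs — the cocycle signs $\beta(\lambda,\mu)$ in $\CC_{\beta}[L]$, the super-signs coming from the $\ZZ/2$-grading of $V_L$ by $\langle\lambda,\lambda\rangle\bmod 2$, and the additional sign-grading distinguishing $V_{P(L)}$ from $V_{N(L)}$ — and one must transport the fractional powers $z^{1/M}$, rather than the integral ones, through the delta-function manipulations, remembering that replacing $a$ by $\theta a$ conjugates the $g_h$-eigenvalue and hence toggles the fractional-power shift. The cases requiring the most care are those in which $\theta$ must be applied both to an input and to the output of an $Y_h$ — for instance $a,b\in V_{N(L)}$ with $c\in V_{P(L+h)}$ — but each still collapses, via $\theta Y(u,z)\theta^{-1}=Y(\theta u,z)$ and $\theta^{2}=\Id$, to the twisted Jacobi identity already in hand for $(V_{L+h},Y_h)$.
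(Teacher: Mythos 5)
Your strategy---verify the twisted-module axioms for $\widetilde{Y}_h$ by rewriting every term through (\ref{eqn:va:cva-vdvops}), using the monoid structure $\tilde{+}$ on $D$ to see that all terms land in the same summand, and reducing to the known lattice results of \S\ref{sec:va:latvamod}---is exactly the argument the paper intends: Theorem \ref{thm:va:cva-VDh} is stated with no proof at all, and the companion Theorem \ref{thm:va:cva-VD} is dismissed as a standard lattice vertex algebra exercise resting on $\tilde{+}$. So in approach you and the paper agree, and your sketch is essentially the filling-in the paper leaves to the reader.

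One step does need repair, and it is the only place where the twisted case is not verbatim the untwisted one. The lifted Kummer involution $\theta$ does not act on the twisted module: it sends $p\otimes\vac_{\mu+h}$ to $\pm p\otimes\vac_{-\mu-h}$, i.e.\ it maps $V_{L+h}$ to $V_{L-h}$, so the relation $\theta Y_h(a,z)\theta^{-1}=Y_h(\theta a,z)$ that you use to move $\theta$'s through vertex operators does not typecheck on the module side; what is true is that $\theta$ intertwines $Y_h(a,z)$ with $Y_{-h}(\theta a,z)$, so your eight cases collapse to the twisted Jacobi identity for the pair $(V_{L\pm h},Y_{\pm h})$, not for $(V_{L+h},Y_h)$ alone. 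Relatedly, for $a$ supported at $\lambda\in P(L)$ the operator $\widetilde{Y}_h(a,z)$ has $z$-powers in $\lab\lambda,h\rab+\ZZ$ on $V_{P(L+h)}$ but in $-\lab\lambda,h\rab+\ZZ$ on $V_{N(L+h)}$, so the $g_h$-grading axiom (and even the claim that $g_h$ is an automorphism of $(V_{D(L)},\widetilde{Y})$) holds only when $2h\in L^*$, i.e.\ when $g_h^2=1$; under that hypothesis $-h\equiv h\pmod{L^*}$, so $V_{L-h}$ and $V_{L+h}$ are isomorphic $g_h$-twisted modules and the reduction goes through after fixing such an identification. You do flag precisely this condition in your parenthetical remark, and it holds in every case the theorem is used for ($h\in L^*$ for the untwisted claim, $h=a\rho/2$ with $a$ odd and $\rho\in L^*$ for the canonically-twisted modules), so this is a repair of bookkeeping rather than of strategy---but as written, the assertion that each case reduces to the twisted Jacobi identity ``already in hand for $(V_{L+h},Y_h)$'' is not quite correct without it, and for general $h\in L\otimes_{\ZZ}\QQ$ with $2h\notin L^*$ the statement itself needs this caveat.
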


\subsection{Main Construction}\label{sec:va:cnstn}

We now take $L=\ZZ \le_1+\ZZ \le_2+\ZZ \le_3$ to be the rank $3$ lattice with bilinear form $\lab\cdot\,,\cdot\rab$ determined by 
\begin{gather}\label{eqn:va:cnstn-bilinearform}
\lab \le_i,\le_j\rab=2-\delta_{i,j}.
\end{gather} 
Then $L$ is an integral, non-even lattice with signature $(1,2)$. Set $\rho:=(\le_1+\le_2+\le_3)/5$ and observe that 
\begin{gather}\label{eqn:va:cnstn-iplambdarho}
\lab \lambda,\rho\rab=k+l+m
\end{gather}
for $\lambda=k\le_1+l\le_2+m\le_3$, so $\rho$ belongs to the dual $L^*$ of $L$. In fact, $L^*/L$ is cyclic of order $5$, and $\rho+L$ is a generator. If we set 
\begin{gather}\label{eqn:va:cnstn-Lj}
	L^j:=\{\lambda\in L\mid \lab\lambda,\rho\rab=j\jmod{2}\},
\end{gather}
then $L=L^0\cup L^1$ is the decomposition of $L$ into its even and odd parts, by which we mean that $\lab\lambda,\lambda\rab$ is even or odd according as $\lambda$ lies in $L^0$ or $L^1$.

Let $V_L$ be the super vertex operator algebra attached to $L$ via the construction of \S\ref{sec:va:latva}, where the bilinear function $b:L\times L\to \ZZ/2\ZZ$ is determined by setting
\begin{gather}\label{eqn:va:cnstn-b}
	b(\le_i,\le_j):=\begin{cases}
		0&\text{when $i\leq j$,}\\
		1&\text{when $i>j$.}
		\end{cases}
\end{gather}

There is an obvious action of the symmetric group $S_3$ on $L$, by permutations of the basis vectors $\le_i$. We lift this action to $V_L$ in the following way. Recall from \S\ref{sec:va:latva} that a lift $\hat{g}\in \Aut(V_L)$ of an automorphism $g\in \Aut(L)$ is determined by a choice of function $\alpha:L\to \{\pm 1\}$ satisfying (\ref{eqn:va:latva-alphacond}). Observe that any such automorphism $\hat{g}$ restricts to an automorphism of $V_{D(L)}$, so long as $g$ preserves the subset $D(L)\subset L$. Taking $\mu=k\lambda$ in (\ref{eqn:va:latva-alphacond}) we have $\alpha((k+1)\lambda)=\alpha(\lambda)\alpha(k\lambda)\beta(\lambda,\lambda)^k\beta(g\lambda, g\lambda)^k$, since $\beta$ is bi-mulitplicative, so given the prescription (\ref{eqn:va:cnstn-b}) we see that $\beta(\lambda,\lambda)=k_1k_2+k_2k_3+k_3k_1$ for $\lambda=k_1\le_1+k_2\le_2+k_3\le_3$, 
which is invariant under the action of $S_3$. So actually $\beta(\lambda,\lambda)=\beta(g\lambda,g\lambda)$, and thus we may assume $\alpha(k\lambda)=\alpha(\lambda)^k$ in (\ref{eqn:va:latva-alphacond}) for $\lambda\in L$ and $k$ a positive integer, when $g$ acts by permuting the $\le_i$. Observe also that for $\lambda,\mu,\nu\in L$ we have
\begin{gather}
	\alpha(\lambda+\mu+\nu)\beta(\lambda,\mu)\beta(\mu,\nu)\beta(\nu,\lambda)
	=\alpha(\lambda)\alpha(\mu)\alpha(\nu)\beta(g\lambda,g\mu)\beta(g\mu,g\nu)\beta(g\nu,g\lambda)
\end{gather}
according to (\ref{eqn:va:latva-alphacond}), which specializes to
\begin{gather}
	\begin{split}\label{eqn:va:cnstn-alphavareps}
	&\alpha(\lambda)\beta(\le_1,\le_2)^{k_1k_2}\beta(\le_2,\le_3)^{k_2k_3}\beta(\le_3,\le_1)^{k_3k_1}\\
	&=\alpha(\le_1)^{k_1}\alpha(\le_2)^{k_2}\alpha(\le_3)^{k_3}
	\beta(g\le_1,g\le_2)^{k_1k_2}\beta(g\le_2,g\le_3)^{k_2k_3}\beta(g\le_3,g\le_1)^{k_3k_1}
	\end{split}
\end{gather}
for $\lambda=k_1\le_1+k_2\le_2+k_3\le_3$. 

Consider the case that $g=\sigma$ is the cyclic permutation $(123)$. From (\ref{eqn:va:cnstn-alphavareps}) we see that we may lift $\sigma$ to $\Aut(V_L)$ by taking $\alpha(\le_i)=1$ for $i\in\{1,2,3\}$, and more generally $\alpha(k_1\le_1+k_2\le_2+k_3\le_3)=(-1)^{k_2k_3+k_3k_1}$, 
in the construction of \S\ref{sec:va:latva}. We denote the corresponding automorphism of $V_L$ by $\hat{\sigma}_0$.
\begin{gather}\label{eqn:va:cnstn-sigma}
	\hat{\sigma}_0(p\otimes \vac_{k_1\le_1+k_2\le_2+k_3\le_3})
	:=(-1)^{k_2k_3+k_3k_1}(\sigma\cdot p)\otimes\vac_{k_3\le_1+k_1\le_2+k_2\le_3}
\end{gather}
Next consider $g=\tau:=(12)$. Applying (\ref{eqn:va:cnstn-alphavareps}) we see that we may lift $\tau$ to $\Aut(V_L)$ by taking $\alpha(\le_i)=1$ as before, and more generally $\alpha(k_1\le_1+k_2\le_2+k_3\le_3)=(-1)^{k_1k_2}$, 
in the construction of \S\ref{sec:va:latva}. We denote the corresponding automorphism of $V_L$ by $\hat{\tau}_0$.
\begin{gather}\label{eqn:va:cnstn-tau}
	\hat{\tau}_0(p\otimes \vac_{k_1\le_1+k_2\le_2+k_3\le_3})
	:=(-1)^{k_1k_2}(\tau\cdot p)\otimes\vac_{k_2\le_1+k_1\le_2+k_3\le_3}
\end{gather}
Using (\ref{eqn:va:cnstn-sigma}) and (\ref{eqn:va:cnstn-tau}) one can 
check that $\hat{\sigma}_0^3=\hat{\tau}_0^2=(\hat{\tau}_0\hat{\sigma}_0)^2=\Id$ in $\Aut(V_L)$, so $\hat{\sigma}_0$ and $\hat{\tau}_0$ do indeed generate a copy of $S_3$ in $\Aut(V_L)$.

Observe that $V_L=V_{L^0}\oplus V_{L^1}$ is the decomposition of $V_L$ into its even and odd parity subspaces, where $L^j$ is defined by (\ref{eqn:va:cnstn-Lj}). Recall the automorphisms $g_h$ of $V_L$, defined for $h\in L\otimes_{\ZZ}\QQ$ by (\ref{eqn:va:latvamod-sigmah}). Then we see from (\ref{eqn:va:cnstn-Lj}) that the canonical involution of $V_L$, acting as $+1$ on the even subspace $V_{L^0}$, and $-1$ on the odd subspace $V_{L^1}$, is realized by $g_{\rho/2}$. 
So the canonically-twisted modules for $V_L$ are exactly the $V_{L+a\rho/2}$, for $a\in\{1,3,5,7,9\}$ (cf. \S\ref{sec:va:latvamod}). 

The prescription (\ref{eqn:va:latvamod-sigmahtw}) furnishes an extension of the action of the canonical involution $g_{\rho/2}$, from $V_L$ to $V_{L+a\rho/2}$. Since $\rho$ is $S_3$-invariant we may also extend the actions of $\hat\sigma_0$ and $\hat\tau_0$ to $V_{L+a\rho/2}$, by setting
\begin{gather}
	\begin{split}\label{eqn:va:cnstn-sigtaunoughtVLtw}
	\hat\sigma_0(p\otimes \vac_{\lambda+a\rho/2})&:=(-1)^{k_2k_3+k_3k_1}(\sigma\cdot p)\otimes\vac_{\sigma\lambda+a\rho/2},\\
	\hat\tau_0(p\otimes \vac_{\lambda+a\rho_2})&:=(-1)^{k_1k_2}(\tau\cdot p)\otimes\vac_{\tau\lambda+a\rho/2},
	\end{split}
\end{gather}
for $p\in S(\hat{\gt{h}}^-)$ and $\lambda=k_1\le_1+k_2\le_2+k_3\le_3$.

Now consider $V_{D(L)}=(V_{D(L)},\widetilde{Y},\vac_0,\omega)$, where $D$ is the cone determined by the basis $\varepsilon_i$,
\begin{gather}
	D=\left\{\sum_{i=1}^3 \alpha_i\varepsilon_i\in L\otimes_{\ZZ}\QQ \mid \alpha_i\geq 0,\,\forall i, \text{ or } \alpha_i<0\,,\forall i\right\},
\end{gather}
and $\widetilde{Y}$ is the vertex operator correspondence defined by (\ref{eqn:va:cva-vdvops}) in \S\ref{sec:va:cva}. Observe that if we set 
\begin{gather}\label{eqn:va:cnstn-leprime}
\le_i':=2\rho-\le_i
\end{gather}
for $i\in\{1,2,3\}$ then $\lab \le_i',\le_j\rab=\delta_{i,j}$. Since the values $\lab \varepsilon_i,\varepsilon_j\rab$ are all positive, there are no non-trivial vectors $\lambda\in D$ with $\lab \lambda,\lambda\rab\leq 0$. So, by virtue of Theorem \ref{thm:va:cva-VD}, the super vertex algebra $V_{D(L)}$ becomes a super vertex operator algebra, with central charge $c=3$, when equipped with the conformal element
\begin{gather}
	\omega=\frac{1}{2}\sum_{i=1}^3\le_i'(-1)\le_i(-1)\otimes \vac_0. 
\end{gather}
Observe that the actions (\ref{eqn:va:cnstn-sigma}) and (\ref{eqn:va:cnstn-tau}), of $\hat\sigma_0$ and $\hat\tau_0$, respectively, restrict from $V_L$ to $V_{D(L)}$, since $D$ is invariant under coordinate permutations. We define automorphisms $\hat\sigma$ and $\hat\tau$ for $V_{D(L)}$, by taking $\hat\sigma:=\hat\sigma_0$ and $\hat\tau:=\hat\tau_0\circ s$, where $s$ is the sign automorphism of $V_{D(L)}$, defined in \S\ref{sec:va:cva}. Since $s$ has order two and commutes with $\hat\tau_0$ we see that $\hat\sigma$ and $\hat\tau$ generate a copy of $S_3$ in $\Aut(V_{D(L)})$, and we denote this group $\hat{G}$.
\begin{gather}
\hat{G}:=\lab \hat\sigma,\hat\tau\rab<\Aut(V_{D(L)})
\end{gather}

Theorem \ref{thm:va:cva-VDh} and the discussion above furnish us with canonically-twisted $V_{D(L)}$-modules $V_{D(L+a\rho/2)}$ for $a$ an odd integer. Note that this furnishes five distinct canonically-twisted $V_{D(L)}$-modules, since the isomorphism type of $V_{D(L+a\rho/2)}$ is determined by $a\pmod{10}$, since $k=10$ is the minimal positive integer such that $k\rho/2\in L$. We extend the action of the canonical involution $g_{\rho/2}$ 
from $V_{D(L)}$ to $V_{D(L+a\rho/2)}$ just as we do for $V_L$-modules (cf. (\ref{eqn:va:latvamod-sigmahtw})), 
by setting 
\begin{gather}\label{eqn:sigmaVxtn}
g_{\rho/2}(p\otimes \vac_{\lambda+a\rho/2}):=(-1)^{\lab \rho,\lambda\rab}p\otimes \vac_{\lambda+a\rho/2}
\end{gather}
for $p\in S(\hat{\gt{h}}^-)$ and $\lambda+a\rho/2\in D(L+a\rho/2)$. 
Similarly, we extend the actions of $\hat\sigma$ and $\hat\tau$, from $V_{D(L)}$ to $V_{D(L+a\rho/2)}$,
\begin{gather}
	\begin{split}\label{eqn:va:cnstn-sigtauVDLtw}
	\hat\sigma(p\otimes \vac_{\lambda+a\rho/2})&:=(-1)^{k_2k_3+k_3k_1}(\sigma\cdot p)\otimes\vac_{\sigma\lambda+a\rho/2},\\
	\hat\tau(p\otimes \vac_{\lambda+a\rho_2})&:=
	\begin{cases}
	(-1)^{k_1k_2}(\tau\cdot p)\otimes\vac_{\tau\lambda+a\rho/2},&\text{ if $\lambda+a\rho/2\in P$,}\\
	(-1)^{k_1k_2+1}(\tau\cdot p)\otimes\vac_{\tau\lambda+a\rho/2},&\text{ if $\lambda+a\rho/2\in N$,}
	\end{cases}
	\end{split}
\end{gather}
and thus obtain actions of $\hat{G}$ on the canonically-twisted $V_{D(L)}$-modules, $V_{D(L+a\rho/2)}$.
In (\ref{eqn:va:cnstn-sigtauVDLtw}) we write $p$ for an element of $S(\hat{\gt{h}}^-)$, and assume $\lambda=k_1\le_1+k_2\le_2+k_3\le_3$.

We now let $V^X$ denote the tensor product super vertex operator algebra 
\begin{gather}\label{eqn:va:cnstn-VX}
V^X:=A(\gt{p})\otimes V_{D(L)}.
\end{gather} 
We write $V_{\tw,a}^\pm$ for the canonically-twisted $V^X$-module, 
\begin{gather}\label{eqn:va:cnstn-Vtw}
V^{\pm}_{\tw,a}:=A(\gt{p})_\tw^{\pm}\otimes V_{D(L+a\rho/2)}.
\end{gather} 
We extend the action of $\hat{G}\simeq S_3$ from $V_{D(L)}$ to $V^X$, and from $V_{D(L+a\rho/2)}$ to $V^{\pm}_{\tw,a}$, by letting $\hat{G}$ act trivially on the Clifford module factors, setting
\begin{gather}
	\hat\sigma(u\otimes v):=u\otimes \hat\sigma(v),\quad
	\hat\tau(u\otimes v):=u\otimes \hat\tau(v),
\end{gather}
for $u\in A(\gt{p})$ and $v\in V_{D(L)}$, and for $u\in A(\gt{p})_\tw^\pm$ and $v\in V_{D(L+a\rho/2)}$.

Given $g\in \hat{G}$ and $a$ an odd integer, we now define $T^{\pm}_{g,a}$ to be the trace of the operator $gg_{\rho/2}p(0)q^{L(0)-c/24}$ on the canonically-twisted $V^X$-module $V^{\pm}_{\tw,a}$,
\begin{gather}\label{eqn:va:cnstn-Tpmga}
	T^{\pm}_{g,a}:=
	\tr_{V^{\pm}_{\tw,a}}gg_{\rho/2}p(0)q^{L(0)-c/24}.
\end{gather}

Recall that $(q;q)_\infty=\prod_{n>0}(1-q^n)$ (cf. (\ref{eqn:intro-poch})). Our concrete construction allows us to compute explicit formulas for the trace functions $T^{\pm}_{g,a}$.

\begin{prop}\label{prop:va:cnstn-tracefnexpressions}
The trace functions $T^{\pm}_{g,a}$ admit the following expressions, for $a\in\{1,3,5,7,9\}$.
\begin{align}
	T^{\pm}_{e,a}&=\pm\frac{q^{-1/12}}
		{(q;q)^2_{\infty}}
	\left(\sum_{k,l,m\geq 0}+\sum_{k,l,m<0}\right)
	\label{eqn:va:cnstn-Tpmeaexplicit}
	(-1)^{k+l+m}q^{(k^2+l^2+m^2)/2+2(kl+lm+mk)+a(k+l+m)/2+3a^2/40}\\
	T^{\pm}_{\hat\tau,a}&=\pm\frac{q^{-1/12}}
		{(q^2;q^2)_{\infty}}
	\left(\sum_{k,m\geq 0}-\sum_{k,m<0}\right)\label{eqn:va:cnstn-Tpmtauaexplicit}
	(-1)^{k+m}q^{3k^2+m^2/2+4km+a(2k+m)/2+3a^2/40}\\
	T^\pm_{\hat\sigma,a}&=\pm q^{-1/12}\frac{(q;q)_{\infty}}{(q^3;q^3)_{\infty}}\label{eqn:va:cnstn-Tpmsigmaaexplicit}
	\sum_{k\in\ZZ}(-1)^kq^{15k^2/2+3ak/2+3a^2/40}
\end{align}
\end{prop}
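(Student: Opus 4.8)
The plan is to compute each trace function directly from the tensor-product structure $V^{\pm}_{\tw,a} = A(\gt{p})_{\tw}^{\pm} \otimes V_{D(L+a\rho/2)}$, using the fact that a graded trace on a tensor product factors as a product of graded traces. For the Clifford factor, the operator $p(0)$ is the $L(0)$-degree preserving component of $Y_{\tw}(p(-1/2)\vac,z)$, so by (\ref{eqn:va:cliffmod-trpAtw}) we have $\tr_{A(\gt{p})_{\tw}^{\pm}} p(0) q^{L(0)-c/24} = \pm q^{1/24}(q;q)_{\infty}$, with $c = 1/2$. This accounts for the overall sign $\pm$ and contributes the factor $(q;q)_{\infty}$, which in the $e$ and $\hat\tau$ cases will be combined with the bosonic oscillator contributions. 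The remaining work is to compute $\tr_{V_{D(L+a\rho/2)}}\, g\, g_{\rho/2}\, q^{L(0)-c'/24}$ on the cone-lattice module, where $c' = 3$; the total central charge is $c = 3 + 1/2 = 49/2$, consistent with the $q$-powers $3a^2/40$ that should emerge from $L(0) = \lab\lambda,\lambda\rab/2$ evaluated on $\lambda = a\rho/2$ together with the $-c/24$ shift.

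First I would split $V_{D(L+a\rho/2)} = V_{P(L+a\rho/2)} \oplus V_{N(L+a\rho/2)}$ according to the sign grading, and recall that each summand is isomorphic to $S(\hat{\gt{h}}^-)\otimes \CC[D\cap(L+a\rho/2)]$ as a module for $\hat{\gt{h}}^-$. The trace therefore factors into an oscillator part, $\tr_{S(\hat{\gt{h}}^-)}(\cdots)$, and a combinatorial sum over lattice points $\lambda + a\rho/2 \in D$. For $g = e$: on $S(\hat{\gt{h}}^-)$ with three bosons and no group element acting, the oscillator trace gives $(q;q)_{\infty}^{-3}$; the lattice sum runs over $\lambda = k\le_1 + l\le_2 + m\le_3$ with all $k,l,m \geq 0$ (the $P$ part) or all $<0$ (the $N$ part), weighted by $q^{\lab\lambda+a\rho/2, \lambda+a\rho/2\rab/2}$ and by the eigenvalue of $g_{\rho/2}$, which is $(-1)^{\lab\rho,\lambda\rab} = (-1)^{k+l+m}$ by (\ref{eqn:sigmaVxtn}) and (\ref{eqn:va:cnstn-iplambdarho}). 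Expanding $\lab\lambda+a\rho/2,\lambda+a\rho/2\rab/2$ using (\ref{eqn:va:cnstn-bilinearform}) gives $(k^2+l^2+m^2)/2 + 2(kl+lm+mk) + a(k+l+m)/2 + 3a^2/40$, since $\lab\rho,\rho\rab = 3/5$. Multiplying the three-boson oscillator trace $(q;q)_{\infty}^{-3}$ by the Clifford factor $\pm q^{1/24}(q;q)_{\infty}$ and the overall $q^{-c/24} = q^{-49/48}$ normalization yields the claimed prefactor $\pm q^{-1/12}(q;q)_{\infty}^{-2}$; this is a routine bookkeeping of $q$-powers ($1/24 - 49/48 = -47/48$, versus the $-1/12$ in the statement — one checks these match once the $L(0)$-eigenvalue of $\vac_{\tw}^{\pm}$, which is $1/16$, is folded in).

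For $g = \hat\tau$ and $g = \hat\sigma$, the group element now acts nontrivially on both the oscillators and the lattice labels, so the trace localizes on the fixed-point sublattice. For $\hat\tau = \hat\tau_0 \circ s$ with $\hat\tau_0$ the transposition $(12)$: on the oscillators, a standard computation gives $\tr_{S(\hat{\gt{h}}^-)} \tau_0 q^{\cdots}$ as a product over the two $\tau$-orbits on a basis of $\gt{h}$, producing a factor $(q;q)_{\infty}^{-1}(q^2;q^2)_{\infty}^{-1}$ from the $+1$ and $-1$ eigenspaces of $\tau$; on the lattice side only points with $\tau\lambda = \lambda$, i.e. $k = l$, survive, and one substitutes $\lambda = k(\le_1+\le_2) + m\le_3$ and re-indexes to reach the summand $(-1)^{k+m}q^{3k^2 + m^2/2 + 4km + a(2k+m)/2 + 3a^2/40}$; the sign automorphism $s$ flips the relative sign between the $P$ and $N$ contributions, turning the $+$ between the two sums into $-$, and the extra signs from (\ref{eqn:va:cnstn-sigtauVDLtw}) and the eigenvalue of $g_{\rho/2}$ combine to give $(-1)^{k+m}$. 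Combining with the Clifford factor $\pm q^{1/24}(q;q)_{\infty}$ leaves $\pm q^{-1/12}(q^2;q^2)_{\infty}^{-1}$. For $\hat\sigma$ the $3$-cycle has one $\tau$-orbit of size $3$ on the basis, giving oscillator trace $\prod_{n>0}(1-q^n)^{-1}\prod_{n>0}(1-\omega^n q^n)^{-1}(1-\bar\omega^n q^n)^{-1}$ with $\omega = e^{2\pi i/3}$; the only $\sigma$-fixed lattice points have $k=l=m$, the cone condition then forces $\lambda \in \ZZ(\le_1+\le_2+\le_3) = 5\ZZ\rho$ together with the shift $a\rho/2$, giving a single sum over $k\in\ZZ$ with exponent $15k^2/2 + 3ak/2 + 3a^2/40$; assembling everything, the mixed oscillator product collapses (after multiplying by the Clifford $(q;q)_{\infty}$) to $\frac{(q;q)_{\infty}}{(q^3;q^3)_{\infty}}$, using the identity $\prod_n (1-q^n)(1-\omega^n q^n)(1-\bar\omega^n q^n) = \prod_n (1-q^{3n})$.

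The main obstacle I anticipate is not the oscillator traces (these are classical: Euler-product formulas for $\tr_{S(\hat{\gt{h}}^-)} g\, q^{L(0)}$ with $g$ of finite order) but rather keeping precise track of the \emph{signs} — the interplay of (i) the sign automorphism $s$ hidden inside $\hat\tau$, (ii) the cocycle signs $(-1)^{k_2k_3+k_3k_1}$ and $(-1)^{k_1k_2}$ from (\ref{eqn:va:cnstn-sigtauVDLtw}), (iii) the eigenvalue $(-1)^{\lab\rho,\lambda\rab}$ of $g_{\rho/2}$ from (\ref{eqn:sigmaVxtn}), and (iv) the $\pm$ from the Clifford twisted vacuum — and verifying that on the $\sigma$-fixed and $\tau$-fixed sublattices these conspire to give exactly the asserted $(-1)^{k}$, $(-1)^{k+m}$, etc. A secondary subtlety is the reindexing on the $\hat\tau$-fixed sublattice: one must check that after setting $k=l$ the quadratic form $\lab\lambda+a\rho/2,\lambda+a\rho/2\rab/2$ really does collapse to $3k^2 + m^2/2 + 4km + a(2k+m)/2 + 3a^2/40$ (using $\lab\le_1+\le_2,\le_1+\le_2\rab = 6$, $\lab\le_1+\le_2,\le_3\rab = 4$, $\lab\le_3,\le_3\rab = 2$, and $\lab\rho,\rho\rab = 3/5$), and similarly for $\hat\sigma$ that restricting to $k=l=m$ gives $15k^2/2$ from $\lab k(\le_1+\le_2+\le_3), \cdot\rab/2 = \lab 5k\rho, 5k\rho\rab/2 = 15k^2/2$. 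Once the sign bookkeeping is settled, each identity follows by direct comparison of the two sides term by term.
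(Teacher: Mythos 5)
Your proposal follows essentially the same route as the paper's proof: compute the trace directly, factoring it into the Clifford contribution $\pm q^{1/24}(q;q)_\infty$ from (\ref{eqn:va:cliffmod-trpAtw}), the bosonic oscillator trace of $g$ (giving $(q;q)_\infty^{-3}$, $(q;q)_\infty^{-1}(q^2;q^2)_\infty^{-1}$ and $(q^3;q^3)_\infty^{-1}$ in the three cases), and a sum over the $g$-fixed points of $D(L+a\rho/2)=D(L)+a\rho/2$, with the signs assembled from $g_{\rho/2}$, the cocycle factors in (\ref{eqn:va:cnstn-sigtauVDLtw}), and the sign automorphism $s$ hidden in $\hat\tau$ --- exactly the intermediate formulas (\ref{eqn:va:cnstn-Tpmeadirect}), (\ref{eqn:va:cnstn-Tpmtauadirect}), (\ref{eqn:va:cnstn-Tpmsigmaadirect}) of the paper, followed by the same re-indexing.

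Three slips, none affecting the method, should be corrected. First, the central charge of $V^X$ is $c=3+\tfrac12=\tfrac72$, not $\tfrac{49}{2}$ (that value belongs to the $V^\natural$ example in the introduction); moreover the Clifford trace $\pm q^{1/24}(q;q)_\infty$ already incorporates both the eigenvalue $\tfrac1{16}$ of $\vac_\tw^\pm$ and the Clifford shift $-\tfrac1{48}$, so the only remaining normalization is the cone factor's $q^{-3/24}=q^{-1/8}$, whence $\tfrac1{24}-\tfrac18=-\tfrac1{12}$; as written, your $q^{-49/48}$ both uses the wrong $c$ and double-counts the Clifford shift, and would not reproduce the prefactor. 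Second, $\lab\le_3,\le_3\rab=1$, not $2$ (from $\lab\le_i,\le_j\rab=2-\delta_{i,j}$); your collapsed exponent $3k^2+m^2/2+4km+a(2k+m)/2+3a^2/40$ is nevertheless the correct one. Third, for $\hat\sigma$ the eigenvalue factors are $(1-\omega q^n)(1-\bar\omega q^n)$ with $\omega=e^{2\pi i/3}$ fixed, not $(1-\omega^n q^n)(1-\bar\omega^n q^n)$; the identity $\prod_{n>0}(1-q^n)(1-\omega q^n)(1-\bar\omega q^n)=\prod_{n>0}(1-q^{3n})$ holds in that form and gives the asserted $(q^3;q^3)_\infty^{-1}$.
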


\begin{proof}
First consider the case that $g=e$ is the identity. From the definition (\ref{eqn:va:cnstn-Tpmga}) of $T^{\pm}_{e,a}$ we derive 
\begin{gather}\label{eqn:va:cnstn-Tpmeadirect}
	T^{\pm}_{e,a}
	=\pm\frac{1}
	{(q;q)_{\infty}^2}
	\sum_{\mu\in D(L+a\rho/2)}(-1)^{\lab \mu-a\rho/2,\rho\rab}q^{\lab \mu,\mu\rab/2-1/12},
\end{gather}
for any odd integer $a$. If also $0<a<10$ then $D(L+a\rho/2)=D(L)+a\rho/2$, and so in this situation we may replace $\mu$ with $k\le_1+l\le_2+m\le_3+a\rho/2$ in the summation, where either $k,l,m\geq 0$ or $k,l,m<0$. This leads to (\ref{eqn:va:cnstn-Tpmeaexplicit}) directly, according to the definition (\ref{eqn:va:cnstn-bilinearform}) of $\lab\cdot\,,\cdot\rab$, and the identity (\ref{eqn:va:cnstn-iplambdarho}). The term $3a^2/40$ appears because $\lab\rho,\rho\rab=3/5$.

Next take $g=\hat\tau$. We compute 
\begin{gather}\label{eqn:va:cnstn-Tpmtauadirect}
	T^{\pm}_{\hat\tau,a}(q)=\pm\frac{1}{(q^2;q^2)_{\infty}}
	\left(\sum_{\substack{\mu\in P(L+a\rho/2)\\ \tau\mu=\mu}}-
	\sum_{\substack{\mu\in N(L+a\rho/2)\\ \tau\mu=\mu}}\right)
	(-1)^{\lab \mu-a\rho/2,\rho+\le_1'\rab}q^{\lab \mu,\mu\rab/2-1/12}
\end{gather}
using the definition (\ref{eqn:va:cnstn-Tpmga}) of $T^{\pm}_{g,a}$, and the formula (\ref{eqn:va:cnstn-sigtauVDLtw}) for the action of $\hat\tau$. (See also (\ref{eqn:va:cnstn-leprime}).) Note that the sign change for summands with $\mu\in N(L+a\rho_2)$ is a consequence of the fact that the action of $\hat\tau$ is defined by composing $\hat\tau_0$ (cf. (\ref{eqn:va:cnstn-sigtaunoughtVLtw})) with the sign automorphism $s$ (cf. (\ref{eqn:va:cva-signaut})).
Restricting to $0<a<10$, we obtain (\ref{eqn:va:cnstn-Tpmtauaexplicit}) from (\ref{eqn:va:cnstn-Tpmtauadirect}) in much the same way as above, by taking $\mu=k\le_1+k\le_2+m\le_3+a\rho/2$ in the summations, with $k,m\geq 0$ in the first of these, and $k,m<0$ in the second. The factor $(-1)^k$ in $(-1)^{k+m}$, corresponding to $(-1)^{\lab \mu-a\rho/2,\le_1'\rab}$ in (\ref{eqn:va:cnstn-Tpmtauadirect}), arises from the factor $(-1)^{k_1k_2}=(-1)^{k^2}=(-1)^k$ in (\ref{eqn:va:cnstn-sigtauVDLtw}). 

Finally we consider $g=\hat\sigma$ (cf. (\ref{eqn:va:cnstn-sigtauVDLtw})). Then the appropriate analogue of (\ref{eqn:va:cnstn-Tpmeadirect}) and (\ref{eqn:va:cnstn-Tpmtauadirect}) is
\begin{gather}\label{eqn:va:cnstn-Tpmsigmaadirect}
	T^{\pm}_{\hat\sigma,a}(q)=\pm\frac{(q;q)_{\infty}}{(q^3;q^3)_{\infty}}
	\sum_{\substack{\mu\in D(L+a\rho/2)\\ \sigma\mu=\mu}}
	(-1)^{\lab \mu-a\rho/2,\rho\rab}q^{\lab \mu,\mu\rab/2-1/12}.
\end{gather}
We obtain (\ref{eqn:va:cnstn-Tpmsigmaaexplicit}) from (\ref{eqn:va:cnstn-Tpmsigmaadirect}), by restricting to $0<a<10$, and substituting $\mu=k\le_1+k\le_2+k\le_3+a\rho/2=(5k+a/2)\rho$ in the summation. This completes the proof of the proposition.
\end{proof}

\section{Mock Theta Functions}\label{sec:mcktht}

In this section we consider the modular properties of the trace functions defined in \S\ref{sec:va:cnstn}, computed explicitly in Proposition \ref{prop:va:cnstn-tracefnexpressions}. We recall some basic facts about Maass forms in \S\ref{sec:mcktht:maass}, including their relationship to mock modular forms. 
We require some facts about theta series of cones in indefinite lattices due to Zwegers \cite{Zwegers}, which we recall in \S\ref{sec:mcktht:indtht}. The proof of our main result, Theorem \ref{thm:intro-maintheorem}, appears in \S\ref{sec:mcktht:um}. In particular, we identify the umbral McKay--Thompson series attached to $X=E_8^3$ as trace functions arising from the action of $G^X$ on canonically-twisted modules for $V^X$ in \S\ref{sec:mcktht:um}.

\subsection{Harmonic Maass Forms}\label{sec:mcktht:maass}

Define the weight $1/2$ {\em Casimir operator} $\Omega_{\tfrac12}$, a differential operator on smooth functions $H:\HH\to\CC$, by setting
\begin{gather}\label{eqn:mcktht:um-cas}
	(\Omega_{\frac12}H)(\tau):=
	-4\Im(\tau)^2\frac{\partial^2H}{\partial\tau\partial\overline{\tau}}(\tau)
	+i\Im(\tau)\frac{\partial H}{\partial\overline{\tau}}(\tau)
	+\frac{3}{16}H(\t).
\end{gather} 
Note that $\Omega_{\tfrac12}=\Delta_{\tfrac 12}+\tfrac{3}{16}$, where $\Delta_{k}$ is the hyperbolic Laplace operator in weight $k$. 

Following the work \cite{BruFun} of Bruinier--Funke (cf. \cite{ono_unearthing,zagier_mock}), a {\em harmonic weak Maass form} of weight $1/2$ for $\Gamma<\SL_2(\ZZ)$ is defined to be a smooth function $H:\HH\to \CC$ that transforms as a (not necessarily holomorphic) modular form of weight $1/2$ for $\Gamma$, is an eigenfunction for $\Omega_{\frac12}$ with eigenvalue $3/16$, and has at most exponential growth as $\tau$ approaches cusps of $\Gamma$. 

Define $\beta(x)$ for $ x \in \RR_{ \ge 0}$ by setting
\begin{gather}\label{eqn:mcktht:indtht-beta}
\beta(x) := \int_x^\infty u^{-1/2} e^{- \pi u} {\rm d}u.
\end{gather}
Note that $\beta$ is related to the incomplete Gamma function by $\sqrt{\pi}\beta(x)=\Gamma(1/2,\pi x)$. If $H$ is a harmonic weak Maass form of weight $1/2$ then we can canonically decompose $H$ into its {\em holomorphic} and {\em non-holomorphic} parts, $H=H^++H^-$, where
\begin{align}
	H^+(\tau)&
	=\sum_{n\gg -\infty}c_H^+(n)q^n,\label{eqn:mcktht:um-Hp}\\
	H^-(\tau)&
	=2ic_H^-(0)\sqrt{2\Im(\tau)}-i\sum_{n>0}c_H^-(n){\frac{1}{\sqrt{2n}}}\beta(4n\Im(\tau))q^{-n},\label{eqn:mcktht:um-Hm}
\end{align}
for some uniquely determined values $c_H^{\pm}(n)\in \CC$. (Cf.  \S3 of \cite{BruFun}. See also \S5 of \cite{zagier_mock} and \S7.1 of \cite{Dabholkar:2012nd}.) Note that $n$ should be allowed to range over rational values in (\ref{eqn:mcktht:um-Hp}) and (\ref{eqn:mcktht:um-Hm}).

We may define the {\em mock modular forms} of weight $1/2$ to be those holomorphic functions $H^+:\HH\to\CC$ which arise as the holomorphic parts of harmonic weak Maass forms of weight $1/2$. For $H^\pm$ as above, the {\em shadow} of $H^+$ is defined, up to a choice of scaling factor $C$, by
\begin{gather}\label{eqn:mcktht:um-Hpshadow}
	g(\tau):=C{\sqrt{2\Im(\tau)}}\overline{\frac{\partial H^-}{\partial\overline{\tau}}}=C\sum_{n\geq 0}c^-_H(n)q^n.\end{gather}
Then so long as $c_H^-(0)=0$ (i.e. $g$ is a cusp form), the function $H^-$ is the {\em Eichler integral} of $g$, 
\begin{gather}\label{eqn:mcktht:um-Hmshadow}
	H^-(\tau)=
	\frac{e(-\tfrac18)}{C}\int_{-\overline{\tau}}^\infty \frac{\overline{g(-\overline{z})}}{\sqrt{z+\tau}}{\rm d} z.
\end{gather}
In this setting, the weak harmonic Maass form $H=H^++H^-$ is called the {\em completion} of $H^+$.

Various choices for $C$ can be found in the literature. In \cite{MUM} we find $C=\sqrt{2m}$ in the case that $H=(H_r)$ is a $2m$-vector-valued Maass form for some $\Gamma_0(N)$, 
such 
that 
\begin{gather}
(H\cdot\theta)(\t,z):=\sum_r H_r(\tau)\theta_{m,r}(\tau,z)
\end{gather}
transforms likes a (not necessarily holomorphic in $\tau$) Jacobi form of weight $1$ and index $m$ for $\Gamma_0(N)$, where
\begin{gather}\label{eqn:mcktht:maass-tht}
	\theta_{m,r}(\tau,z):=\sum_{k\in\ZZ}q^{(2km+r)^2/4m}e^{2\pi i z(2km+r) }.
\end{gather}

The cases of relevance to us here all have $m=30$, so we take $C=\sqrt{60}$ henceforth in (\ref{eqn:mcktht:um-Hpshadow}) and (\ref{eqn:mcktht:um-Hmshadow}). All the shadows arising in this work will be linear combinations of the unary theta functions 
\begin{gather}\label{eqn:mcktht-Smr}
S_{m,r}(\tau):=\left.\frac{1}{2\pi i}\frac{\partial}{\partial z}\theta_{m,r}(\tau,z)\right|_{z=0}=\sum_{k\in\ZZ}(2km+r)q^{(2km+r)^2/4m},
\end{gather}
where $m=30$ and $r\neq 0 \pmod{30}$.
In particular, we will not encounter any examples for which the shadow $g$ (cf. (\ref{eqn:mcktht:um-Hpshadow})) is not a cusp form.

\subsection{Indefinite Theta Series}\label{sec:mcktht:indtht}

We will be concerned with quadratic forms of signature $(1,1)$, and so take $r=2$ in the notation of \cite{Zwegers}. (Even though our main construction uses a lattice of signature $(1,2)$, it will develop in \S\ref{sec:mcktht:um} that the trace functions (\ref{eqn:va:cnstn-Tpmeaexplicit}) and (\ref{eqn:va:cnstn-Tpmtauaexplicit}) can be analyzed in terms of theta series of indefinite lattices with signature $(1,1)$. The remaining trace function (\ref{eqn:va:cnstn-Tpmsigmaaexplicit}) is essentially a theta series with rank $1$, and consequently can be handled by classical methods.)

Given a symmetric $2\times 2$ matrix $A$, we define a quadratic form $Q: \RR^2 \to \RR$, by setting
\begin{equation}
Q(x):= \frac{1}{2} ( x,A x),
\end{equation}
where $(\cdot\,,\cdot)$ denotes the usual Euclidean inner product on $\RR^2$. The associated bilinear form is
\begin{equation}
B(x,y):= ( x, A y) = Q(x+y)-Q(x)-Q(y) \, .
\end{equation}
Henceforth assume that $A$ has signature $(1,1)$. Then the set of vectors $c \in \RR^2$ with $Q(c)<0$ is non-empty and has two components.
Let $C_Q$ be one of these components. Two vectors $c^{(1)},c^{(2)}$ belong to the same component if $B(c^{(1)},c^{(2)})<0$. Thus, picking a vector $c_{0}$
in $C_Q$ we may identify
\begin{equation}
C_Q= \left\{ c \in \RR^2 \mid Q(c)<0, ~B(c,c_0)<0 \right\} \, .
\end{equation}
Zwegers also defines a set of representatives of {\em cusps},
\begin{equation}
S_Q:= \left\{ c \in \ZZ^2 \mid \text{$c$ primitive, $Q(c)=0$, $B(c,c_0)<0$} \right\} \, .
\end{equation}

Define the {\em indefinite theta function} with characteristics $a, b \in \RR^2$, with respect to
$c^{(1)}, c^{(2)} \in C_Q$, by setting
\begin{gather}
\begin{split}\label{eqn:mcktht:indtht-vartheta}
&\vartheta^{c^{(1)},c^{(2)}}_{a,b}(\tau) :=\\
& \sum_{\nu \in a + \ZZ^2}  \left( E \left( \frac{B(c^{(1)},\nu)}{\sqrt{-Q(c^{(1)})}} \sqrt{\Im(\tau)} \right)  -E \left( \frac{B(c^{(2)},\nu)}{\sqrt{-Q(c^{(2)})}} \sqrt{\Im(\tau)} \right) \right) 
q^{Q(\nu)} e^{2 \pi i B(\nu,b)},
\end{split}
\end{gather}
where $E(z) := \sgn (z) (1-\beta(z^2))$. 
Corollary 2.9 of \cite{Zwegers} (cf. also Theorem 3.1 of \cite{zagier_mock}) shows that $\vartheta^{c^{(1)},c^{(2)}}_{a,b}(\tau)$ is a non-holomorphic modular form of weight $1$. 

Presently we will see that these indefinite theta functions can be used to define harmonic Maass forms whose non-holomorphic parts can be written in terms of the 
functions
\begin{equation}\label{eqn:mcktht-Rab}
R_{a,b}(\tau) := \sum_{\nu \in a+\ZZ} \sgn (\nu) \beta(2 \nu^2 \Im(\tau)) q^{-\nu^2/2} e^{- 2 \pi i \nu b}.
\end{equation}

Note that the $R_{a,b}$ are Eichler integrals (cf. (\ref{eqn:mcktht:um-Hmshadow})) of unary theta functions of weight $3/2$. Indeed, we have
\begin{equation}\label{eqn:mcktht-Rabgab}
R_{a,b}(\tau) 
=e(-\tfrac18) \int_{- \bar \tau}^{i \infty} \frac{g_{a,-b}(z)}{\sqrt{z+\tau}}{\rm d}z,
\end{equation}
for 
\begin{equation}\label{eqn:mcktht-gab}
g_{a,b}(\tau) :=\sum_{\nu \in a+\ZZ} \nu q^{\nu^2/2} e^{2 \pi i \nu b}.
\end{equation}
Observe also that
\begin{equation}\label{eqn:mcktht-gabSmr}
g_{\frac{r}{2m},0}(m \tau) = \frac{1}{2m} S_{m,r}(\tau)
\end{equation}
(cf. (\ref{eqn:mcktht-Smr})), which is useful for comparing the results of \cite{Zwegers} to those of \cite{MUM}.

Define $\langle c \rangle_\ZZ^\perp:=\{ \xi \in \ZZ^r \mid B(c, \xi)=0 \}$. For future use we quote the $r=2$ case of Proposition 4.3 from \cite{Zwegers}.
\begin{prop}[Zwegers]\label{prop:mcktht-Zwegersprop}
Let $c \in C_Q \cap \ZZ^2$ be primitive. Let $P_0\subset \RR^2$ be the finite set determined by requiring that
\begin{gather}
\left\{ \mu \in a+\ZZ^2  \mid 0 \leq \frac{B(c,\mu)}{2 Q(c)} <1 \right\} 
=  \bigsqcup_{\mu_0 \in P_0} \left( \mu_0 + \langle c \rangle_\ZZ^\perp \right).
\end{gather}
Then we have
\begin{gather}
	\begin{split}\label{eqn:mcktht-Zwegersprop}
\sum_{\nu \in a + \ZZ^2} &\sgn \left( B(c,\nu) \right) \beta \left( - \frac{B(c,\nu)^2}{Q(c)} \Im(\tau) \right) 
e^{2 \pi i Q(\nu) \tau + 2 \pi i B(\nu,b)} \\
& = - \sum_{\mu_0 \in P_0} R_{\frac{B(c,\mu_0)}{2Q(c)},B(c,b)} (-2 Q(c) \tau) \cdot \sum_{\xi \in \mu_0^\perp + \langle c \rangle_\ZZ^\perp} e^{2 \pi i Q(\xi) \tau + 2 \pi i B(\xi,b^\perp)},
	\end{split}
\end{gather}
where $\mu_0^\perp= \mu_0 - \frac{B(c,\mu_0)}{2 Q(c)} c $ and $b^\perp= b - \frac{B(c,b)}{2 Q(c)} c$.
\end{prop}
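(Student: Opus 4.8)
The plan is to derive this as the $r=2$ case of Proposition 4.3 of \cite{Zwegers}, following Zwegers' strategy of splitting the sum along the anisotropic vector $c$. Since $Q(c)<0$ and $A$ has signature $(1,1)$, the orthogonal complement $(\RR c)^\perp$ is a line on which $Q$ is \emph{positive} definite, and $\RR^2=\RR c\oplus(\RR c)^\perp$. For $\nu\in\RR^2$ I would write $\nu=\nu_\parallel+\nu_\perp$ with $\nu_\parallel:=\tfrac{B(c,\nu)}{2Q(c)}\,c$, so that $Q(\nu)=Q(\nu_\parallel)+Q(\nu_\perp)$ with $Q(\nu_\parallel)=\tfrac{B(c,\nu)^2}{4Q(c)}$, and $B(\nu,b)=\tfrac{B(c,\nu)}{2Q(c)}B(c,b)+B(\nu_\perp,b^\perp)$ for $b^\perp$ as in the statement; this last identity uses $B(\nu_\perp,c)=0$. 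In particular the factors $\sgn(B(c,\nu))$ and $\beta\!\left(-\tfrac{B(c,\nu)^2}{Q(c)}\Im(\tau)\right)$ depend on $\nu$ only through the height $h(\nu):=\tfrac{B(c,\nu)}{2Q(c)}$.

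Next I would organize the lattice coset $a+\ZZ^2$ by this height. Because $B(c,\cdot)$ is constant on cosets of $\langle c\rangle_\ZZ^\perp$ and $h(\nu+c)=h(\nu)+1$, the heights of the cosets of $\langle c\rangle_\ZZ^\perp$ in $a+\ZZ^2$ form a coset of $\tfrac1D\ZZ$ for a suitable $D\in\ZZ_{>0}$, and $P_0$ picks out exactly those cosets with $h\in[0,1)$; hence one obtains the disjoint decomposition
\[
a+\ZZ^2=\bigsqcup_{\mu_0\in P_0}\ \bigsqcup_{n\in\ZZ}\bigl(\mu_0+nc+\langle c\rangle_\ZZ^\perp\bigr),
\]
with $h(\mu_0+nc+\xi)=\tfrac{B(c,\mu_0)}{2Q(c)}+n$ and $(\mu_0+nc+\xi)_\perp=\mu_0^\perp+\xi$ for $\xi\in\langle c\rangle_\ZZ^\perp$, where $\mu_0^\perp$ is as in the statement. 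Substituting the identities above, the summand on the left-hand side factors into a ``parallel'' part depending only on $t:=\tfrac{B(c,\mu_0)}{2Q(c)}+n$ and a ``perpendicular'' part $e^{2\pi i Q(\mu_0^\perp+\xi)\tau+2\pi i B(\mu_0^\perp+\xi,b^\perp)}$ depending only on $\xi$.

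The step I expect to be the crux is justifying the resulting interchange of the $n$-sum and the $\xi$-sum. The $\xi$-sum converges absolutely since $Q$ is positive definite on $(\RR c)^\perp$, so it is a genuine holomorphic theta series. For the $n$-sum the exponential $e^{2\pi i Q(c)t^2\tau}$ \emph{grows} like $e^{2\pi|Q(c)|t^2\Im(\tau)}$, but it is multiplied by $\beta\!\left(4|Q(c)|t^2\Im(\tau)\right)$, and the elementary bound $\beta(x)=O\!\left(x^{-1/2}e^{-\pi x}\right)$ shows the product decays like $e^{-2\pi|Q(c)|t^2\Im(\tau)}$. Hence the full double sum converges absolutely and Fubini applies; this is precisely the estimate that also underlies the convergence of Zwegers' indefinite theta functions recalled in \S\ref{sec:mcktht:indtht}.

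Finally I would identify the two pieces. Summing the parallel part over $n\in\ZZ$ gives, after comparing with the definition (\ref{eqn:mcktht-Rab}) of $R_{a,b}$ evaluated at $-2Q(c)\tau$ and recording that $\sgn(B(c,\nu))=-\sgn(t)$ because $Q(c)<0$, the term $-\,R_{\frac{B(c,\mu_0)}{2Q(c)},\,B(c,b)}(-2Q(c)\tau)$ (up to the convention for the characteristic built into (\ref{eqn:mcktht-Rab})); summing the perpendicular part over $\xi$ gives exactly $\sum_{\xi\in\mu_0^\perp+\langle c\rangle_\ZZ^\perp}e^{2\pi i Q(\xi)\tau+2\pi i B(\xi,b^\perp)}$. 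Summing over $\mu_0\in P_0$ then yields the asserted identity. Apart from the convergence and rearrangement step, everything is bookkeeping with the orthogonal splitting and matching the normalizations of $\beta$ and $R_{a,b}$ to those in \cite{Zwegers}.
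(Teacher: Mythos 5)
The paper itself offers no proof of this proposition: it is quoted as the $r=2$ case of Proposition 4.3 of \cite{Zwegers}. Your reconstruction follows Zwegers' own route and is sound: the orthogonal splitting $\nu=\nu_\parallel+\nu_\perp$ along $c$, the observation that $\sgn(B(c,\nu))$ and the $\beta$-factor depend only on the height $t=B(c,\nu)/2Q(c)$, the decomposition $a+\ZZ^2=\bigsqcup_{\mu_0\in P_0}\bigsqcup_{n\in\ZZ}\bigl(\mu_0+nc+\langle c\rangle_\ZZ^\perp\bigr)$ (which needs only $h(\nu+c)=h(\nu)+1$, $c\in\ZZ^2$, and the defining property of $P_0$; your auxiliary ``$\tfrac1D\ZZ$'' remark is unnecessary), the factorization into parallel and perpendicular parts, and the justification of the rearrangement via $\beta(x)\leq \pi^{-1}x^{-1/2}e^{-\pi x}$ together with positive definiteness of $Q$ on $(\RR c)^\perp$ are all correct.

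The one place you hedge --- ``up to the convention for the characteristic built into (\ref{eqn:mcktht-Rab})'' --- is exactly where the only genuine issue sits, and you should resolve it rather than leave it parenthetical. Your own identities $B(\nu,b)=tB(c,b)+B(\nu_\perp,b^\perp)$ and $\sgn(B(c,\nu))=-\sgn(t)$, combined with definition (\ref{eqn:mcktht-Rab}) (whose character is $e^{-2\pi i\nu b}$), give for each $\mu_0$ the factor $-R_{\frac{B(c,\mu_0)}{2Q(c)},\,-B(c,b)}(-2Q(c)\tau)$, i.e.\ second characteristic $-B(c,b)$ rather than the printed $B(c,b)$. These do not coincide in general (one has $R_{a,b+1}=e(-a)R_{a,b}$ and $R_{a,-b}=-R_{-a,b}$), so the discrepancy is not absorbed by any stated normalization: either the second index in (\ref{eqn:mcktht-Zwegersprop}) should be $-B(c,b)$, or the quoted statement tacitly uses the opposite character sign in the definition of $R_{a,b}$. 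Note that the paper's own application in the proof of Proposition \ref{prop:mcktht:um-TXg} --- where $B(c^{(2)},b)=+\tfrac12$ yet $R_{\cdot,-\frac12}(15\tau)$ appears --- agrees with the version your computation produces, so your derivation is the internally consistent one; just state the sign explicitly and the proof is complete.
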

Note that the term 
\begin{equation}
 \sum_{\xi \in \mu_0^\perp + \langle c \rangle_\ZZ^\perp} e^{2 \pi i Q(\xi) \tau + 2 \pi i B(\xi,b^\perp)}
\end{equation}
is a classical (positive-definite) theta function of weight $1/2$.

The indefinite theta function construction (\ref{eqn:mcktht:indtht-vartheta}) is applied to mock theta functions of Ramanujan (other than $\chi_0$ and $\chi_1$, which are treated in \cite{MR2558702}) in \cite{Zwegers}. 
Amongst those appearing are the four functions $F_0$, $F_1$, $\phi_0$ and $\phi_1$, where $\phi_0$ and $\phi_1$ are defined in (\ref{eqn:intro-phi01}), and
\begin{gather}
	\begin{split}\label{eqn:mcktht:um-F01}
	F_0(q)&:=\sum_{n\geq 0}\frac{q^{2n^2}}{(q;q^2)_n},\\
	F_1(q)&:=\sum_{n\geq 0}\frac{q^{2n(n+1)}}{(q;q^2)_{n+1}}.
	\end{split}
\end{gather}
These are amongst the fifth order mock theta functions introduced by Ramanujan in his last letter to Hardy. 

To study these functions Zwegers introduces $6$-vector-valued mock modular forms 
\begin{gather}
	F_{5,1}(\tau)=(F_{5,1,r}(\tau)),\quad F_{5,2}(\tau)=(F_{5,2,r}(\tau)),
\end{gather} 
on
pages 74 and 79, respectively, of \cite{Zwegers}. Inspecting their definitions, and substituting $2\tau$ for $\tau$, 
we find that
\begin{align}	
	F_{5,1,3}(2\tau)&=q^{-1/120}(F_0(q)-1), &F_{5,2,3}(2\t)&=q^{-1/120}\phi_0(-q),\label{eqn:mcktht:indtht-F5Fphi0}\\
	F_{5,1,4}(2\tau)&=q^{71/120}F_1(q), &F_{5,2,4}(2\t)&=-q^{-49/120}\phi_1(-q).\label{eqn:mcktht:indtht-F5Fphi1}
\end{align}

The content of Proposition 4.10 of \cite{Zwegers} is that
\begin{gather}\label{eqn:mcktht:um-HFG1}
	H_{5,1}(\tau)=F_{5,1}(\tau)-G_{5,1}(\tau),
\end{gather}
where the vector-valued functions $H_{5,1}$ and $G_{5,1}$ are such that the components of $2\eta(\tau) H_{5,1}(\tau)$ are non-holomorphic indefinite theta functions of the form $\vartheta_{a,b}^{c^{(1)},c^{(2)}}(\tau)$ (cf. (\ref{eqn:mcktht:indtht-vartheta})), and the third and fourth components of $G_{5,1}$ satisfy
\begin{gather}\label{eqn:mcktht:um-G3RRRR}
	G_{5,1,3}(2\tau)=-\frac{1}{2}\left(R_{\frac{19}{60},0}+R_{\frac{29}{60},0}-R_{\frac{49}{60},0}-R_{\frac{59}{60},0}\right)(60\tau), \\ 
	G_{5,1,4}(2\tau)=-\frac{1}{2}\left(R_{\frac{13}{60},0}+R_{\frac{23}{60},0}-R_{\frac{43}{60},0}-R_{\frac{53}{60},0}\right)(60\tau).
	\label{eqn:mcktht:um-G4RRRR}
\end{gather}
(Cf. (\ref{eqn:mcktht-Rab}) for $R_{a,b}$.) Moreover, $H_{5,1}(\tau)$ is an eigenfunction for $\Omega_{\frac12}$ 
with eigenvalue $3/16$ (cf. (\ref{eqn:mcktht:um-cas})). In other words, the components of 
$H_{5,1}=(H_{5,1,r})$ are 
harmonic weak Maass forms of weight $1/2$ (cf. \S\ref{sec:mcktht:maass}).

Proposition 4.13 of \cite{Zwegers} establishes a similar result for $F_{5,2}$, namely
\begin{gather}\label{eqn:mcktht:um-HFG2}
	H_{5,2}(\tau)=F_{5,2}(\tau)-G_{5,2}(\tau),
\end{gather}
where $H_{5,2}$ is again a harmonic weak Maass form of weight $1/2$, 
and $G_{5,2}=-G_{5,1}$. 

The left hand sides of (\ref{eqn:mcktht:um-HFG1}) and (\ref{eqn:mcktht:um-HFG2}) are harmonic weak Maass forms of weight $1/2$, so they admit canonical decompositions into holomorphic (cf. (\ref{eqn:mcktht:um-Hp})) and non-holomorphic (cf. (\ref{eqn:mcktht:um-Hm})) parts. The summands $F_{5,1}$ and $F_{5,2}$ on the right hand sides are holomorphic by construction, and the $R_{a,b}$ are of the same form as (\ref{eqn:mcktht:um-Hm}) by construction (cf. (\ref{eqn:mcktht-Rab})), so the right hand sides of (\ref{eqn:mcktht:um-HFG1}) and (\ref{eqn:mcktht:um-HFG2}) are precisely the decompositions of $H_{5,1}$ and $H_{5,2}$ into its holomorphic and non-holomorphic parts. 

Equivalently, the four functions $F_{5,j,r}$ are mock modular forms of weight $1/2$ with completions given by the $H_{5,j,r}$, and the $G_{5,j,r}$ are the Eichler integrals of their shadows. Thus we can describe their shadows explicitly. Applying (\ref{eqn:mcktht-Rabgab}), (\ref{eqn:mcktht-gab}) and (\ref{eqn:mcktht-gabSmr}), and the identities $g_{1-a,0}=g_{-a,0}=-g_{a,0}$, we see that $F_{5,1,3}(2\tau)$ and $-F_{5,2,3}(2\tau)$
have the same shadow
\begin{gather}\label{eqn:mcktht:indtht-F5123shadow}
\frac12(S_{30,1}+S_{30,11}+S_{30,19}+S_{30,29})(\tau),
\end{gather}
while $F_{5,1,4}(2\tau)$ and $-F_{5,2,4}(2\tau)$ both have shadow given by
\begin{gather}\label{eqn:mcktht:indtht-F5124shadow}
\frac12(S_{30,7}+S_{30,13}+S_{30,17}+S_{30,27})(\tau).
\end{gather}

\subsection{McKay--Thompson Series}\label{sec:mcktht:um}

In this section we prove our main result, 
Theorem \ref{thm:intro-maintheorem}, that the trace functions 
arising from the action of $G^X$ on the $V^\pm_{\tw,a}$ 
recover the Fourier expansions of the mock modular forms $H^X_g$ attached to $g\in G^X\simeq S_3$ by umbral moonshine at $X=E_8^3$.

To formulate this precisely, let $T^X_{g}=(T^X_{g,r})$ be the vector of Laurent series in (rational powers of) $q$, with components indexed by $\ZZ/60\ZZ$, such that
\begin{gather}\label{eqn:mcktht-TXg}
	T^X_{g,r}:=\begin{cases}
	T^{\mp}_{g,1},&\text{ for $r=\pm 1,\pm 11,\pm 19,\pm 29\pmod{60}$,}\\
	T^{\mp}_{g,7},&\text{ for $r=\pm 7,\pm 13,\pm 17,\pm 23\pmod{60}$,}\\
	0,&\text{ else,}
	\end{cases}
\end{gather}
and define the {\em polar part at infinity} of $T^X_{g}$ to be the vector of polynomials in (rational powers of) $q^{-1}$ obtained by removing all non-negative powers of $q$ in each component $T^X_{g,r}$. Let $g\mapsto\bar{\chi}_g^X$ be the natural permutation character of ${G^X}$, so that $\bar{\chi}_g$ is $3$, $1$ or $0$, according as $g$ has order $1$, $2$ or $3$, and define a vector $S^X_g=(S^X_{g,r})$ of theta series, with components indexed by $\ZZ/60\ZZ$, by setting
\begin{gather}\label{eqn:mcktht-SXg}
	S^X_{g,r}:=\begin{cases}
			\pm\bar{\chi}_g(S_{30,1}+S_{30,11}+S_{30,19}+S_{30,29}),&\text{ if $r=\pm1,\pm11,\pm19,\pm 29\pmod{60}$,}\\
			\pm\bar{\chi}_g(S_{30,7}+S_{30,13}+S_{30,17}+S_{30,23}),&\text{ if $r=\pm7,\pm13,\pm17,\pm 23\pmod{60}$,}\\
			0&\text{ else.}
        		\end{cases}
\end{gather}
(Cf. (\ref{eqn:mcktht-Smr}).) 

Set $S^X:=S^X_e$, and let $\sigma^X:\SL_2(\ZZ)\to\GL_{60}(\CC)$ denote the multiplier system of $S^X$, so that
\begin{gather}\label{eqn:mcktht:um-sigmaX}
	\sigma^X(\gamma)S^X(\gamma\tau)(c\tau+d)^{-3/2}=S^X(\tau)		
\end{gather}
for $\tau \in\HH$ and $\gamma\in \SL_2(\ZZ)$, when $(c,d)$ is the lower row of $\gamma$.
Our next goal (to be realized in Proposition \ref{prop:mcktht:um-TXg}) is to show that $2T^X_g$ is a mock modular form with shadow $S^X_g$ for $g\in G^X$. This condition tells us what the multiplier system of $T^X_g$ must be, at least when $o(g)$ is $1$ or $2$ (as $S^X_g$ is identically zero when $o(g)=3$). 
For the convenience of the reader we describe this multiplier system in more detail now. 

It is cumbersome to work with matrices in $\GL_{60}(\CC)$, but we can avoid this since any non-zero component of $T^X_g$ is $\pm1$ times $T^X_{g,1}$ or $T^X_{g,7}$. That is, we can work with the $2$-vector-valued functions $\check T^X_g:=(T^X_{g,1},T^X_{g,7})$ and $\check S^X_{g}:=(S^X_{g,1},S^X_{g,7})$. If $h=(h_r)$ is a modular form of weight $1/2$ with multiplier system conjugate to that of $S^X$, and satisfying 
\begin{gather}\label{eqn:mcktht-hr}
	h_{r}:=\begin{cases}
	h_{1},&\text{ for $r=\pm 1,\pm 11,\pm 19,\pm 29\pmod{60}$,}\\
	h_{7},&\text{ for $r=\pm 7,\pm 13,\pm 17,\pm 23\pmod{60}$,}\\
	0,&\text{ else,}
	\end{cases}
\end{gather}
then, setting $\check h=(h_1,h_7)$, we have
\begin{gather}
	\check{h}\left(\frac{a\t+b}{c\t+d}\right)\check\nu\left(\frac{a\t+b}{c\t+d}\right)
	(c\tau+d)^{-1/2}=\check{h}(\t)
\end{gather}
where $\check\nu:\SL_2(\ZZ)\to\GL_2(\CC)$ is determined  by the rules
\begin{gather}
	\begin{split}\label{eqn:mcktht-checknu}
	\check\nu
	\begin{pmatrix}
		1&1\\
		0&1
	\end{pmatrix}
	&=
	\begin{pmatrix}
		e(-\tfrac{1}{120})&0\\
		0&e(-\tfrac{49}{120})
	\end{pmatrix},\\
	\check\nu
	\begin{pmatrix}
		0&-1\\
		1&0
	\end{pmatrix}
	&=\frac{2e(\frac{3}{8})}{\sqrt{15}}
	\begin{pmatrix}
		\sin(\pi\tfrac{1}{30})+\sin(\pi\tfrac{11}{30})&\sin(\pi\frac{7}{30})+\sin(\pi\frac{13}{30})\\
		\sin(\pi\tfrac{7}{30})+\sin(\pi\frac{13}{30})&-\sin(\pi\frac{1}{30})-\sin(\pi\frac{11}{30})
	\end{pmatrix}.
	\end{split}
\end{gather}

We now return to our main objective: the determination of the modularity of $T^X_g$ for $g\in {G^X}$.
To describe the multiplier system for $T^X_g$ when $o(g)=3$ we require the function $\rho_{3|3}:\Gamma_0(3)\to \CC^\times$, defined by setting 
\begin{gather}\label{eqn:mcktht:um-rho33}
	\rho_{3|3}\left(\begin{matrix}a&b\\c&d\end{matrix}\right):=e\left(\frac{cd}{9}\right).
\end{gather}
Evidently $\rho_{3|3}$ has order $3$, and restricts to the identity on $\Gamma_0(9)$.

\begin{prop}\label{prop:mcktht:um-TXg}
Let $g\in G^X$. Then $2T^X_g$ is the Fourier series of a mock modular form for $\Gamma_0(o(g))$ whose shadow is $S^X_g$. The polar part at infinity of $2T^X_g$ is given by 
\begin{gather}
	T^X_{g,r}=\begin{cases} \mp 2q^{-1/120}+O(1),&\text{ if $r=\pm 1,\pm11,\pm19,\pm29\pmod{60}$,}\\
	O(1),&\text{ otherwise,}
	\end{cases}
\end{gather}
and 
$2T^X_g$ has vanishing polar part at all non-infinite cusps of $\Gamma_0(o(g))$. If $o(g)=3$ then the multiplier system of $2T^X_g$ is given by $\gamma\mapsto \rho_{3|3}(\gamma)\overline{\sigma^X(\gamma)}$. 
\end{prop}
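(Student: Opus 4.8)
The plan is to treat the three conjugacy classes of $G^X\simeq S_3$ separately, reading off the relevant data from the explicit series of Proposition~\ref{prop:va:cnstn-tracefnexpressions} and the definition (\ref{eqn:mcktht-TXg}). By (\ref{eqn:mcktht-TXg}) it suffices to analyze $T^{\pm}_{g,1}$ and $T^{\pm}_{g,7}$ for $g\in\{e,\hat\tau,\hat\sigma\}$, and the substitutions $(k,l,m)\mapsto(-k-1,-l-1,-m-1)$, $(k,m)\mapsto(-k-1,-m-1)$, $k\mapsto-k-1$ in (\ref{eqn:va:cnstn-Tpmeaexplicit})--(\ref{eqn:va:cnstn-Tpmsigmaaexplicit}) show that $T^{\pm}_{g,7}=-T^{\pm}_{g,3}$, so in the end only $\chi_0$ (through $a=1$) and $\chi_1$ (through $a=3$) enter. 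For $o(g)=1$, comparing (\ref{eqn:va:cnstn-Tpmeaexplicit}) with (\ref{eqn:intro:zwegers}) gives $T^{\pm}_{e,1}=\pm q^{-1/120}(2-\chi_0)$ and $T^{\pm}_{e,7}=\mp q^{71/120}\chi_1$; by Zwegers \cite{MR2558702} these cone (false-)theta sums are holomorphic parts of weight-$1/2$ harmonic weak Maass forms for $\SL_2(\ZZ)$, whose shadows are computed there, and comparing with the shadow computations (\ref{eqn:mcktht:indtht-F5123shadow})--(\ref{eqn:mcktht:indtht-F5124shadow}) via (\ref{eqn:mcktht-Rabgab})--(\ref{eqn:mcktht-gabSmr}) and the classical fifth-order mock theta identities identifies the shadow of $2T^X_e$ with $S^X_e$, including the normalizing factor $\bar\chi_e=3$ in (\ref{eqn:mcktht-SXg}).

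For $o(g)=2$ we have $q^{-1/12}/(q^2;q^2)_{\infty}=\eta(2\tau)^{-1}$, and the double sum in (\ref{eqn:va:cnstn-Tpmtauaexplicit}) is a (false-)theta series for the signature-$(1,1)$ form $Q(k,m)=3k^2+4km+m^2/2$ over the two-sided cone $\{k,m\ge0\}\cup\{k,m<0\}$ with linear shift $a(2k+m)/2$. I would realize this cone sum as one half of a Zwegers indefinite theta function $\vartheta^{c^{(1)},c^{(2)}}_{a,b}(\tau)$ of the form (\ref{eqn:mcktht:indtht-vartheta}), with $c^{(1)},c^{(2)}$ the isotropic lines of $Q$ straddling the cone and $a,b$ fixed by the shift. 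Corollary~2.9 of \cite{Zwegers} then gives that $\vartheta^{c^{(1)},c^{(2)}}_{a,b}$ is a non-holomorphic weight-$1$ modular form of level dividing $2$, and Proposition~\ref{prop:mcktht-Zwegersprop} splits it into the holomorphic partial theta plus Eichler integrals $R_{a,b}$ of unary weight-$3/2$ theta functions times positive-definite thetas; dividing by $\eta(2\tau)$ (weight $1/2$, classical multiplier) exhibits the completion of $2T^X_{\hat\tau}$ as a weight-$1/2$ harmonic weak Maass form for $\Gamma_0(2)$, and reading the $g_{a,b}$'s off the $R_{a,b}$'s via (\ref{eqn:mcktht-Rabgab})--(\ref{eqn:mcktht-gabSmr}) and (\ref{eqn:mcktht-Smr}) identifies its shadow with $S^X_{\hat\tau}$.

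For $o(g)=3$, since $(q;q)_{\infty}/(q^3;q^3)_{\infty}=q^{1/12}\eta(\tau)/\eta(3\tau)$, (\ref{eqn:va:cnstn-Tpmsigmaaexplicit}) reads \[ T^{\pm}_{\hat\sigma,a}=\pm\,\frac{\eta(\tau)}{\eta(3\tau)}\sum_{k\in\ZZ}(-1)^k q^{15k^2/2+3ak/2+3a^2/40}, \] a product of the weight-$0$ eta quotient for $\Gamma_0(3)$ (known multiplier) with a convergent unary weight-$1/2$ theta function (multiplier from the classical theta transformation law), hence a genuine weight-$1/2$ modular form for $\Gamma_0(3)$; since $S^X_{\hat\sigma}=0$, this is all that ``mock modular with shadow $S^X_{\hat\sigma}$'' requires. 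It then remains to compute the multiplier of the product on generators of $\Gamma_0(3)$ using the Dedekind transformation formula for $\eta$ and the theta multiplier, and to verify it equals $\rho_{3|3}(\gamma)\overline{\sigma^X(\gamma)}$: the conjugate $\overline{\sigma^X}$ arises because the scalar theta multiplier is the complex conjugate of the corresponding entry of the $S^X$-multiplier (cf.\ (\ref{eqn:mcktht:um-sigmaX}), (\ref{eqn:mcktht-checknu})), while $\eta(\tau)/\eta(3\tau)$ contributes exactly the order-$3$ character $\rho_{3|3}$ of (\ref{eqn:mcktht:um-rho33}). In all three cases the polar parts follow by inspection: the zero-index term of each sum, times the constant term $1$ of the $\eta$- or Pochhammer prefactor, contributes $\pm q^{-1/12+3a^2/40}$, equal to $\pm q^{-1/120}$ for $a=1$ in the components $r=\pm1,\pm11,\pm19,\pm29$, while every other term of every sum is a strictly positive power of $q$ and the $a=7$ components begin at $q^{431/120}$; vanishing of the polar part at the non-infinite cusp $0$ of $\Gamma_0(o(g))$ (for $o(g)\in\{2,3\}$) follows by expanding the completion near $\tau=0$ using the transformation law just established.

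The main obstacle is the $o(g)=2$ step: one must pin down the precise Zwegers indefinite theta function --- the correct $c^{(1)},c^{(2)}$ and characteristics $a,b$ so that (\ref{eqn:va:cnstn-Tpmtauaexplicit}) is literally recovered, up to the factor $\eta(2\tau)^{-1}$, as the holomorphic part of its completion --- and then run Proposition~\ref{prop:mcktht-Zwegersprop} while keeping careful track of the level ($\Gamma_0(2)$), the shadow normalization $C=\sqrt{60}$, and the sign and permutation-character bookkeeping, so that the shadow emerges as exactly $S^X_{\hat\tau}$ in (\ref{eqn:mcktht-SXg}). The $o(g)=3$ multiplier identification is routine in principle but likewise demands care in matching normalizations against $\sigma^X$ as specified by (\ref{eqn:mcktht-checknu}).
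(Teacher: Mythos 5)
Your outline does follow the paper's strategy point for point (reduce to the components with $a\in\{1,7\}$, identify the $o(g)=1$ series with $\chi_0,\chi_1$ and invoke Zwegers' fifth-order results for completions and shadows, realize the $o(g)=2$ series as signature-$(1,1)$ indefinite theta functions divided by $\eta(2\tau)$, handle $o(g)=3$ by classical eta/theta methods, read off polar parts at $\infty$ and use the transformation laws at the cusp $0$). The problem is that the step you yourself flag as the main obstacle is where the entire content of the proposition for $o(g)=2$ lies, and the one concrete instruction you give there would fail: in (\ref{eqn:mcktht:indtht-vartheta}) the vectors $c^{(1)},c^{(2)}$ must lie in $C_Q$, i.e.\ $Q(c^{(j)})<0$ --- isotropic vectors are not admissible, since the factor $\sqrt{-Q(c^{(j)})}$ vanishes --- and it is precisely the non-isotropic choice that produces the nonvanishing correction terms which become the Eichler integral of $S^X_{\hat\tau}$. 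The proof works with $A=\left(\begin{smallmatrix}6&4\\4&1\end{smallmatrix}\right)$, $c^{(1)}=\left(\begin{smallmatrix}-1\\4\end{smallmatrix}\right)$, $c^{(2)}=\left(\begin{smallmatrix}-2\\3\end{smallmatrix}\right)$ (so $Q(c^{(1)})=-5$, $Q(c^{(2)})=-\tfrac{15}{2}$, while $B(c^{(1)},\nu)=10(k+\tfrac1{10})$ and $B(c^{(2)},\nu)=-5(m+\tfrac1{10})$ reproduce the cone inequalities), with $b=\tfrac1{20}\left(\begin{smallmatrix}3\\-2\end{smallmatrix}\right)$ and characteristic $a=\tfrac1{10}\left(\begin{smallmatrix}1\\1\end{smallmatrix}\right)$ or $\tfrac1{10}\left(\begin{smallmatrix}3\\3\end{smallmatrix}\right)$; then in Proposition \ref{prop:mcktht-Zwegersprop} the $c^{(1)}$-correction vanishes because its positive-definite theta factor is an antisymmetric sum, and Euler's identity converts the $c^{(2)}$-correction into $\eta(2\tau)$ times exactly the $R$-functions whose associated unary thetas sum to $S^X_{\hat\tau,r}$ with multiplicity $\bar\chi^X_{\hat\tau}=1$. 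Until you exhibit such data and these two cancellations, and likewise carry out the $\Gamma_0(2)$ transformation and the cusp-$0$ vanishing via Corollary 2.9 of \cite{Zwegers}, the identification of the shadow with $S^X_{\hat\tau}$ and the absence of poles at $0$ are asserted rather than proved.

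Two smaller corrections to your bookkeeping. The $a=7$ components do not begin at $q^{431/120}$: the $N$-part of the cone contributes (e.g.\ $(k,l,m)=(-1,-1,-1)$, or $(k,m)=(-1,-1)$ in (\ref{eqn:va:cnstn-Tpmtauaexplicit})), and the true leading term is $\pm2q^{71/120}$ --- still $O(1)$, so the polar-part conclusion survives, but the ``by inspection'' must include the negative cone. Also the sign relating $T^{\pm}_{g,7}$ to $T^{\pm}_{g,3}$ is delicate and $g$-dependent: the paper's proof records $T^{\pm}_{\hat\tau,7}=T^{\pm}_{\hat\tau,3}$ for the order-two class, in contrast with your uniform minus sign, and since these signs fix the $r\equiv\pm7,\pm13,\pm17,\pm23$ components of $T^X_g$ and $S^X_g$ in (\ref{eqn:mcktht-TXg}) and (\ref{eqn:mcktht-SXg}) (and hence the signs in the $\phi_1$ identity of Theorem \ref{thm:intro-rammcktht}), they must be verified class by class from (\ref{eqn:va:cnstn-Tpmtauadirect})--(\ref{eqn:va:cnstn-Tpmsigmaadirect}) rather than asserted once. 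Your $o(g)=3$ reduction to $\eta(\tau)/\eta(3\tau)$ times a unary theta is the paper's route, but the multiplier identification $\rho_{3|3}\overline{\sigma^X}$ and the cusp-$0$ check there still require the (routine) eta and theta multiplier computations.
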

\begin{proof}

According to our definition (\ref{eqn:mcktht-TXg}), the components of $T^X_g$ are $T^{\pm}_{g,1}$ or $T^{\pm}_{g,7}$. In practice it is more convenient to work with $T^{\pm}_{g,3}$ than $T^{\pm}_{g,7}$, and we may do so because these functions coincide up to a sign (depending upon $g$). To see this, observe that $D(L+a\rho/2)=-D(L-a\rho/2)$ for $a$ an odd integer. Then comparing with the expressions (\ref{eqn:va:cnstn-Tpmeadirect}), (\ref{eqn:va:cnstn-Tpmtauadirect}) and (\ref{eqn:va:cnstn-Tpmsigmaadirect}), we see that $T^{\pm}_{g,a}=T^{\pm}_{g,-a}$ when $o(g)=1$ or $3$, and $T^{\pm}_{g,a}=-T^\pm_{g,-a}$ when $o(g)=2$. We also have $T^\pm_{g,a}=-T^\pm_{g,a+10}$ for all $g$, so in particular,
\begin{gather}\label{eqn:mcktht-37equiv}
	\begin{split}
	T^{\pm}_{e,7}&=-T^{\pm}_{e,3},\\
	T^{\pm}_{\hat\tau,7}&=T^{\pm}_{\hat\tau,3},\\
	T^{\pm}_{\hat\sigma,7}&=-T^{\pm}_{\hat\sigma,3}.
	\end{split}
\end{gather}

We will now verify that the series $T^X_g$ are Fourier expansions of vector-valued mock modular forms, and we will determine their shadows. For the case that $g=e$ we compute $3/40-1/12=-1/120$ and $27/40-1/12=71/120$, and see, upon comparison of (\ref{eqn:va:cnstn-Tpmeaexplicit}) with (\ref{eqn:intro:zwegers}), that $T^{\pm}_{e,1}(q)=\pm q^{-1/120}(2-\chi_0(q))$ and $T^{\pm}_{e,3}=\pm q^{71/120}\chi_1(q)$. In particular, 
\begin{gather}
	\begin{split}\label{eqn:mcktht:um-Tchi}
	2T^-_{e,1}&=2q^{-1/120}(\chi_0(q)-2),\\ 
	2T^-_{e,7}&=2q^{71/120}\chi_1(q)
	\end{split}
\end{gather}
(cf. (\ref{eqn:mcktht-37equiv})). 
Note that identities $H^X_{e,1}=2q^{-1/120}(\chi_0(q)-2)$ and $H^X_{e,7}=2q^{71/120}\chi_1(q)$ are predicted in \S5.4 of \cite{MUM}, but it is not verified there that this specification yields a mock modular form with shadow $S^X=S^X_e$. 

We will determine the modular properties of $2T^-_{e,1}$ and $2T^-_{e,7}$ by applying the results of Zwegers on $F_0$, $F_1$, $\phi_0$ and $\phi_1$ that we summarized in \S\ref{sec:mcktht:indtht}. To apply these results we first recall the expressions 
\begin{gather}
	\begin{split}\label{eqn:mcktht:um-chiFphi}
\chi_0(q) &= 2 F_0(q) - \phi_0(-q), \\
\chi_1(q) &= 2 F_1(q) + q^{-1} \phi_1(-q),
	\end{split}
\end{gather}
which are proven in \S3 of \cite{MR1577032}. (The first of these was given by Ramaujan in his last letter to Hardy, where he also mentioned the existence of a similar formula relating $\chi_1$, $F_1$ and $\phi_1$.)
Thus we obtain
\begin{gather}\label{eqn:mcktht:um-TFF}
	2T^-_{e,1}=4F_{5,1,3}(2\tau)-2F_{5,2,3}(2\tau),\\
	2T^-_{e,7}=4F_{5,1,4}(2\tau)-2F_{5,2,4}(2\tau),
\end{gather}
upon comparison of (\ref{eqn:mcktht:indtht-F5Fphi0}), (\ref{eqn:mcktht:indtht-F5Fphi1}), (\ref{eqn:mcktht:um-Tchi}) and (\ref{eqn:mcktht:um-chiFphi}). 

Applying the results of Zwegers on $F_{5,1}$ and $F_{5,2}$ recalled in \S\ref{sec:mcktht:indtht}, and the equations (\ref{eqn:mcktht:indtht-F5123shadow}) and (\ref{eqn:mcktht:indtht-F5124shadow}) in particular, we conclude that $2T^-_{e,1}$ and $2T^-_{e,7}$ are mock modular forms of weight $1/2$, with respective shadows 
given by
\begin{gather}
3(S_{30,1}+S_{30,11}+S_{30,19}+S_{30,29})(\tau),\\
3(S_{30,7}+S_{30,13}+S_{30,17}+S_{30,27})(\tau).
\end{gather}
In other words, the shadow of $T^X_{e}$ is precisely $S^X_e$, as we required to show. The modular transformation formulas for $H_{5,1}(\tau)$ and $H_{5,2}(\tau)$ given in Propositions 4.10 and 4.13 of \cite{Zwegers}, respectively, show that $T^X_e$ transforms in the desired way under $\SL_2(\ZZ)$. 

We now consider the case that $o(g)=2$. We may take $g=\hat\tau$. We again begin by using the results recalled in \S\ref{sec:mcktht:indtht} to analyze the components $T^-_{\hat\tau,1}$ and $T^-_{\hat\tau,7}$ separately. For $T^-_{\hat\tau,1}$ let
\begin{equation}
A  = \begin{pmatrix} 6 & 4 \\ 4 & 1 \end{pmatrix}, ~ a= \begin{pmatrix} 1/10 \\ 1/10 \end{pmatrix}, ~ b=\begin{pmatrix} 3/20 \\ -2/20 \end{pmatrix}, ~ c^{(1)}= \begin{pmatrix} -1  \\ 4 \end{pmatrix}, ~  c^{(2)}= \begin{pmatrix} -2 \\ 3 \end{pmatrix}.
\end{equation}
Then a direct computation using
\begin{gather}
\nu=\begin{pmatrix} k+\frac{1}{10} \\ m+\frac{1}{10} \end{pmatrix}, \;
Q(\nu)= 3 k^2+\frac{m^2}{2} + 4km+k+\frac{m}{2}+\frac3{40},\; 
B(\nu,b)= \frac{k+m}{2} + \frac{1}{10},\\
\quad \sgn \left( B(c^{(1)},\nu) \right) =\sgn \left(k+\frac{1}{10}\right),\;
\sgn  \left( B(c^{(2)},\nu) \right)= \sgn \left(-m-\frac{1}{10}\right),
\end{gather}
gives
\begin{equation}
2T^-_{\hat\tau,1}= - \frac{e(-\frac{1}{10})
}
{\eta(2 \tau)} \sum_{\nu \in a +\ZZ^2} \left( \sgn \left( B(c^{(1)},\nu) \right) - \sgn  \left( B(c^{(2)},\nu) \right) \right)
e^{2 \pi i Q(\nu) \tau+ 2 \pi i B(\nu,b)}.
\end{equation}
Comparing this to the indefinite theta function construction (\ref{eqn:mcktht:indtht-vartheta}) we find that
\begin{gather} \label{theta}
	\begin{split}
&  \vartheta_{a,b}^{c^{(1)},c^{(2)}}(\tau) = - 
e(\tfrac{1}{10})\eta(2 \tau) 2T^-_{\hat\tau,1}(\tau)  \\
 & +\sum_{ \nu \in a + \ZZ^2}
 \left(\sum_{k=1}^2(-1)^k
\sgn (B(c^{(k)},\nu))  \beta \left(  -  \frac{B(c^{(k)},\nu)^2 \Im (\tau)}{Q(c^{(k)})} \right) 
\right)
q^{Q(\nu)}e^{2 \pi i B(\nu, b)}.
 	\end{split}
 \end{gather}

We now use Proposition \ref{prop:mcktht-Zwegersprop} to rewrite the terms involving $c^{(1)}$ and $c^{(2)}$ in the second line of (\ref{theta}). For the term with $c^{(1)}$ the set $P_0$ of Proposition \ref{prop:mcktht-Zwegersprop} has one element, $\mu_0=\frac{1}{10} \left(\begin{smallmatrix} -9 \\ 1 \end{smallmatrix}\right)$, and we find $\langle c^{(1)} \rangle_\ZZ^\perp= \left\{\left(\begin{smallmatrix} 0 \\ m \end{smallmatrix}\right)\mid m \in \ZZ\right\}$, $b^\perp= \frac12\left(\begin{smallmatrix} 0 \\ {1} \end{smallmatrix}\right)$ and $\mu_0^\perp=\frac12\left( \begin{smallmatrix} 0 \\- {7} \end{smallmatrix}\right)$.
Thus
\begin{equation}
\sum_{\xi \in \mu_0^\perp + \langle c \rangle_\ZZ^\perp} e^{2 \pi i Q(\xi) \tau + 2 \pi i B(\xi,b^\perp)}
=
e(-\tfrac{1}{4})
\sum_{m \in \ZZ} (-1)^m q^{(m-1/2)^2/2}=0,
\end{equation}
so this term vanishes.

For the term with $c^{(2)}$ the set $P_0$ consists of three elements, $\mu_0=\frac{1}{10}\binom{1}{1},\frac{1}{10}\binom{1}{11},\frac{1}{10}\binom{1}{21}$, and we have
$B(c^{(2)},\mu_0)/2 Q(c^{(2)})= \frac1{30}, \frac{11}{30}, \frac{21}{20}$, in the respective cases. The last value of $\mu_0$ also leads to a vanishing contribution, while the other two
values lead to 
\begin{equation}
-
e(\tfrac{1}{12})R_{\frac{1}{30},-\frac{1}{2}}(15 \tau) \eta(2 \tau) -
e(-\tfrac{1}{12})R_{\frac{11}{30},-\frac{1}{2}}(15 \tau) \eta(2 \tau),
\end{equation}
which we see by applying Euler's identity
\begin{equation}
q^{1/12} \sum_{k \in \ZZ} (-1)^k q^{3k^2+k}= \eta(2 \tau) .
\end{equation}
We thus have
\begin{equation}\label{eqn:mcktht:um-tau1}
- 
e(-\tfrac{1}{10})\frac{\vartheta^{c^{(1)},c^{(2)}}_{a,b}(\tau)}{\eta(2 \tau)} = 2T^-_{\hat\tau,1} - 
e(-\tfrac{1}{60})R_{\frac{1}{30},-\frac{1}{2}}(15 \tau) - 
e(-\tfrac{11}{60})
R_{\frac{11}{30},-\frac{1}{2}}(15 \tau).
\end{equation}
In particular, $T^-_{\hat\tau,1}$ is the Fourier expansion of a holomorphic function on $\HH$, which we henceforth denote $T^-_{\hat\tau,1}(\tau)$. 

Since $T^-_{\hat\tau,1}(\tau)$ is holomorphic, the function (\ref{eqn:mcktht:um-tau1}) is a harmonic weak Maass form of weight $1/2$, according to Proposition 4.2 of \cite{Zwegers}. (Cf. also \S\ref{sec:mcktht:maass}.) Thus we are in a directly similar situation to that encountered at the end of \S\ref{sec:mcktht:indtht}. Namely, we have that $T^-_{\hat\tau,1}(\tau)$ is a mock modular form of weight $1/2$ (for some congruence subgroup of $\SL_2(\ZZ)$), and the second and third summands of the right hand side of (\ref{eqn:mcktht:um-tau1}) comprise the Eichler integral of its shadow. Applying (\ref{eqn:mcktht-Rabgab}), (\ref{eqn:mcktht-gab}) and (\ref{eqn:mcktht-gabSmr}), 
and also 
\begin{gather}
e(-\tfrac{1}{60})g_{\frac{1}{30},\frac{1}{2}}(15 \tau)+e(-\tfrac{11}{60})g_{\frac{11}{30},\frac{1}{2}}(15 \tau)
=\frac{1}{30}\left(
S_{30,1}+S_{30,11}+S_{30,19}+S_{30,29}
\right)(\tau),
\end{gather}
we conclude that the shadow of $2T^-_{\hat\tau,1}(\tau)$ is indeed $S^X_{\hat\tau,1}(\tau)$ (cf. (\ref{eqn:mcktht-SXg})).

For $T^-_{\hat\tau,7}$ we take $A$, $b$, $c^{(1)}$,  $c^{(2)}$ as before but set $a=\frac{1}{10}\binom{3}{3}$.
We now have 
\begin{gather}
\nu=\begin{pmatrix} k+\frac{3}{10} \\ m+\frac{3}{10} \end{pmatrix},\;
 Q(\nu)= 3 k^2 + \frac{m^2}{2}+ 4km+3k+\frac{3 m}{2}-\frac{27}{40},\;
B(\nu,b)= \frac{k+m}{2} + \frac{3}{10},\\
\sgn \left( B(c^{(1)},\nu) \right) =\sgn (k+3/10),\;\sgn  \left( B(c^{(2)},\nu) \right)= \sgn (-m-3/10).
\end{gather}

Proceeding as we did for $T^-_{\hat\tau,1}$, the contribution from the $c^{(1)}$ term vanishes again.
For the $c^{(2)}$ term we find that $P_0$ consists of the three values $\mu_0=\frac{1}{10}\binom{3}{3},\frac{1}{10}\binom{3}{13},\frac{1}{10}\binom{3}{23}$, and we have
$B(c^{(2)},\mu_0)/2 Q(c^{(2)})= \frac{3}{30}, \frac{13}{30}, \frac{23}{20}$, respectively.  The first value of $\mu_0$ leads to a vanishing contribution while the
other two terms lead to
\begin{equation}
- 
e(-\tfrac{3}{10})\frac{\vartheta^{c^{(1)},c^{(2)}}_{a,b}(\tau)}{\eta(2 \tau)} = 2T^-_{\hat\tau,7} - 
e(-\tfrac{13}{60})R_{\frac{13}{30},-\frac{1}{2}}(15 \tau) - 
e(-\tfrac{23}{60})R_{\frac{23}{30},-\frac{1}{2}}(15 \tau).
\end{equation}

We conclude thus that $T^-_{\hat\tau,7}$ is a the Fourier expansion of a mock modular form of weight $1/2$, 
and using
\begin{gather}
e(-\tfrac{13}{60})g_{\frac{13}{30},\frac{1}{2}}(15 \tau)+e(-\tfrac{23}{60})g_{\frac{23}{30},\frac{1}{2}}(15 \tau)
=\frac{1}{30}\left(
S_{30,7}+S_{30,13}+S_{30,17}+S_{30,23}
\right)(\tau)
\end{gather}
we see that the shadow of $2T^-_{\hat\tau,1}(\tau)$ is $S^X_{\hat\tau,1}(\tau)$ (cf. (\ref{eqn:mcktht-SXg})). So we have verified that the shadow of $2T^-_{g}=(2T^-_{g,r})$ is $S^X_g=(S^X_{g,r})$ for $o(g)=2$. 

Corollary 2.9 of \cite{Zwegers} details the modular transformation properties of the indefinite theta functions $\vartheta^{c^{(1)},c^{(2)}}_{a,b}(\tau)$. Applying these formulas, much as in the proofs of Propositions 4.10 and 4.13. in \cite{Zwegers}, we see that $2T^-_{\hat\tau}$ transforms in the desired way under the action of $\Gamma_0(2)$. 

Corollary 2.9 also enables us to compute the expansion of $2T^-_{\hat\tau}$ at the cusp of $\Gamma_0(2)$ represented by $0$. We ultimately find that both $T^-_{\hat\tau,1}(\tau)$ and $T^-_{\hat\tau,7}(\tau)$ vanish as $\tau\to 0$. Thus $2T^-_{\hat\tau}$ has no poles away from the infinite cusp.

It remains to consider the case $o(g)=3$, but this can be handled by applying classical results on positive-definite theta functions, since the formula (\ref{eqn:va:cnstn-Tpmsigmaaexplicit}) gives $T^-_{\hat\sigma,1}$ and $T^-_{\hat\sigma,7}$ explicitly in terms of the Dedekind eta function and the theta series of a rank one lattice. We easily check that these functions transform in the desired way under $\Gamma_0(3)$, and have no poles away from the infinite cusp of $\Gamma_0(3)$. In particular, $2T^-_{\hat\sigma}$ is modular, and has vanishing shadow. 
\end{proof}

We are now ready to prove our main results.

\begin{proof}[Proof of Theorem \ref{thm:intro-maintheorem}]
Proposition \ref{prop:mcktht:um-TXg} demonstrates that the functions $2T^X_g$ are mock modular forms of weight $1/2$ with the claimed shadows, multiplier systems, and polar parts. It remains to verify that they are the unique such functions.

The uniqueness in case $g=e$ is shown in Corollary 4.2 of \cite{MUM}, 
using the fact (see Theorem 9.7 in \cite{Dabholkar:2012nd}) that there are no weak Jacobi forms of weight 1. We will give a different (but certainly related) argument here.

Consider first the case that $o(g)$ is $1$ or $2$. It suffices to show that if $h=(h_r)$ is a modular form of weight $1/2$, transforming with the same multiplier system as $H^X$ under $\Gamma_0(2)$, with $h_r$ vanishing whenever $r$ does not belong to  
\begin{gather}\label{eqn:mcktht:um-rrestriction}
\{\pm 1,\pm 7,\pm 11,\pm 13,\pm 17,\pm 19,\pm 23, \pm 29 \},
\end{gather}
then $h$ vanishes identically. The multiplier system for $H^X$ is trivial when restricted to $\Gamma(120)$, so the components $h_r$ are modular forms for $\Gamma_0(2)\cap\Gamma(120)=\Gamma(120)$. Satz 5.2. of \cite{Sko_Thesis} is an effective version of the celebrated theorem of Serre--Stark \cite{MR0472707} on modular forms of weight $1/2$ for congruence subgroups of $\SL_2(\ZZ)$. It tells us that the space of modular forms of weight $1/2$ for $\Gamma(120)$ is spanned by certain linear combinations of the {\em thetanullwerte} $\theta^0_{n,r}(\tau):=\theta_{n,r}(\tau,0)$, and the only $n$ that can appear are those that divide $30$. On the other hand, the restriction (\ref{eqn:mcktht:um-rrestriction}) implies that any non-zero component $h_r$ must belong to one of $q^{-1/120}\CC[[q]]$ or $q^{71/120}\CC[[q]]$. We conclude that all the $h_r$ are necessarily zero by checking, using 
\begin{gather}\label{eqn:mcktht:um-thetanullwerte}
\theta_{n,r}^0(\tau)=\sum_{k\in\ZZ}q^{(2kn+r)^2/4n},
\end{gather}
that none of the $\theta^0_{n,r}$ belong to either space, for $n$ a divisor of $30$.

The case that $o(g)=3$ is very similar, except that the $h_r$ are now modular forms on $\Gamma_0(9)\cap \Gamma(120)$, which contains $\Gamma(360)$, and the relevant thetanullwerte are those $\theta_{n,r}^0$ with $n$ a divisor of $90$. We easily check using (\ref{eqn:mcktht:um-thetanullwerte}) that there are non-zero possibilities for $h_r$, and this completes the proof.
\end{proof}

\begin{proof}[Proof of Theorem \ref{thm:intro-rammcktht}]
Taking now (\ref{eqn:intro-HXg}) as the definition of $H^X_g$, the identities (\ref{eqn:intro-rammcktht1}) follow directly from the definition (\ref{eqn:mcktht-TXg}) of $T^X_g$, and the explicit expressions (\ref{eqn:va:cnstn-Tpmeaexplicit}) for the components of $T^X_e$. 

The identities (\ref{eqn:intro-rammcktht2}) follow from the characterization of $H^X_g$ for $o(g)=2$ that is entailed in Theorem \ref{thm:intro-maintheorem}. Indeed, using Zwegers' results (viz., Propositions 4.10 and 4.13 in \cite{Zwegers}) on the modularity of $\phi_0(-q)$ and $\phi_1(-q)$, we see that the function defined by the right hand side of (\ref{eqn:intro-rammcktht2}) is a vector-valued mock modular form with exactly the same shadow as $2T^X_{\hat\tau}$, transforming with the same multiplier system under $\Gamma_0(2)$, and having the same polar parts at both the infinite and non-infinite cusps of $\Gamma_0(2)$. So it must coincide with $H^X_{2A,1}=2T^X_{\hat\tau}$ according to Theorem \ref{thm:intro-maintheorem}. This completes the proof.
\end{proof}

\begin{proof}[Proof of Corollary \ref{cor:intro-qseriesid}]
Andrews established Hecke-type ``double sum'' identities for $\phi_0$ and $\phi_1$ in \cite{MR814916}. Rewriting these slightly, we find
\begin{gather}
	\phi_0(-q)=\frac{(q;q)_{\infty}}{(q^2;q^2)_\infty^2}\label{eqn:mcktht:um-phi0Hecke}
	\left( \sum_{k,m \ge 0} -
\sum_{k,m <0} \right)_{\text{$k=m$ mod $2$}}
(-1)^m q^{k^2/2+m^2/2+4km+k/2+3m/2},\\
	-q^{-1}\phi_1(-q)=\frac{(q;q)_{\infty}}{(q^2;q^2)_\infty^2}\label{eqn:mcktht:um-phi1Hecke}
	\left( \sum_{k,m \ge 0} -
\sum_{k,m <0} \right)_{\text{$k=m$ mod $2$}}
(-1)^m q^{k^2/2+m^2/2+4km+3k/2+5m/2}.
\end{gather}
Armed with the identities (\ref{eqn:intro-rammcktht2}), we obtain (\ref{eqn:intro-qseriesid1}) and (\ref{eqn:intro-qseriesid7}) by comparing (\ref{eqn:mcktht:um-phi0Hecke}) and (\ref{eqn:mcktht:um-phi1Hecke}) with the explicit expression (\ref{eqn:va:cnstn-Tpmtauaexplicit}) for the components of $T^X_{\hat\tau}$.
\end{proof}

\section*{Acknowledgement}
We thank Miranda Cheng for particularly helpful discussions and advice that took place in the early stages of this work. We also thank Ching Hung Lam for discussions on the vertex operator algebra structure here employed. The research of J.D. was supported in part by the Simons Foundation (\#316779).
Both authors gratefully acknowledge support from the U.S. National Science Foundation (grants 1203162 and 1214409).

\clearpage

\appendix

\begin{table}[h]
\section{Coefficients}\label{sec:coeffs}
\vspace{-12pt}
\begin{minipage}[t]{0.49\linewidth}
\centering
\caption{\label{tab:coeffs:30+6,10,15_1}$H^{X}_{g,1}$, $X=E_8^3$}\smallskip
\begin{tabular}{r|rrr}
\toprule
$[g]$	&1A	&2A	&3A	\\
\midrule
$\G_g$	&$1|1$	&$2|1$	&$3|3$	\\
\midrule
-1	&-2	&-2	&-2	\\
119	&2	&2	&2	\\
239	&2	&-2	&2	\\
359	&4	&0	&-2	\\
479	&2	&-2	&2	\\
599	&6	&2	&0	\\
719	&4	&0	&-2	\\
839	&6	&2	&0	\\
959	&6	&-2	&0	\\
1079	&10	&2	&-2	\\
1199	&6	&-2	&0	\\
1319	&12	&0	&0	\\
1439	&10	&-2	&-2	\\
1559	&14	&2	&2	\\
1679	&14	&-2	&2	\\
1799	&18	&2	&0	\\
1919	&14	&-2	&2	\\
2039	&24	&4	&0	\\
2159	&22	&-2	&-2	\\
2279	&26	&2	&2	\\
2399	&26	&-2	&2	\\
2519	&34	&2	&-2	\\
2639	&30	&-2	&0	\\
2759	&42	&2	&0	\\
2879	&40	&-4	&-2	\\
2999	&48	&4	&0	\\
3119	&48	&-4	&0	\\
3239	&58	&2	&-2	\\
3359	&56	&-4	&2	\\
3479	&72	&4	&0	\\
3599	&70	&-2	&-2	\\
3719	&80	&4	&2	\\
3839	&84	&-4	&0	\\
3959	&100	&4	&-2	\\
4079	&96	&-4	&0	\\
4199	&116	&4	&2	\\
4319	&116	&-4	&-4	\\
4439	&134	&6	&2	\\
4559	&140	&-4	&2	\\
\bottomrule
\end{tabular}
\end{minipage}
\begin{minipage}[t]{0.49\linewidth}
\centering
\caption{\label{tab:coeffs:30+6,10,15_7}$H^{X}_{g,7}$, $X=E_8^3$}\smallskip
\begin{tabular}{r|rrr}
\toprule
$[g]$	&1A	&2A	&3A	\\
\midrule
$\G_g$	&$1|1$	&$2|1$	&$3|3$	\\
\midrule
71	&2	&-2	&2	\\
191	&4	&0	&-2	\\
311	&4	&0	&-2	\\
431	&6	&2	&0	\\
551	&6	&-2	&0	\\
671	&8	&0	&2	\\
791	&8	&0	&2	\\
911	&12	&0	&0	\\
1031	&10	&-2	&-2	\\
1151	&14	&2	&2	\\
1271	&16	&0	&-2	\\
1391	&18	&2	&0	\\
1511	&18	&-2	&0	\\
1631	&24	&0	&0	\\
1751	&24	&0	&0	\\
1871	&30	&2	&0	\\
1991	&30	&-2	&0	\\
2111	&36	&0	&0	\\
2231	&38	&-2	&2	\\
2351	&46	&2	&-2	\\
2471	&46	&-2	&-2	\\
2591	&54	&2	&0	\\
2711	&60	&0	&0	\\
2831	&66	&2	&0	\\
2951	&68	&-4	&2	\\
3071	&82	&2	&-2	\\
3191	&84	&0	&0	\\
3311	&98	&2	&2	\\
3431	&102	&-2	&0	\\
3551	&114	&2	&0	\\
3671	&122	&-2	&2	\\
3791	&138	&2	&0	\\
3911	&144	&-4	&0	\\
4031	&162	&2	&0	\\
4151	&174	&-2	&0	\\
4271	&192	&4	&0	\\
4391	&200	&-4	&2	\\
4511	&226	&2	&-2	\\
4631	&238	&-2	&-2	\\
\bottomrule
\end{tabular}
\end{minipage}
\end{table}

\clearpage



\providecommand{\href}[2]{#2}\begingroup\raggedright\endgroup




\end{document}